\documentclass[sn-mathphys, pdflatex]{sn-jnl}

\usepackage{color}
\usepackage{cleveref}
\usepackage{enumitem}
\usepackage{mathrsfs}
\usepackage{amsmath,amssymb,amsthm,amsfonts,mathtools,amscd}
\usepackage{bm}
\usepackage{graphicx}
\usepackage{enumerate}
\usepackage{url}
\usepackage{color}
\usepackage{enumitem}
\usepackage{bbding}
\usepackage{geometry}
\usepackage{txfonts}
\usepackage{xcolor}
\usepackage[numbers,sort&compress]{natbib}
\bibpunct[, ]{[}{]}{,}{n}{,}{,}
\newcommand{\Sph}{\mathbb{S}}
\DeclareMathOperator*\bdry{bdry}
\DeclareMathOperator*\dom{dom}

\DeclareMathOperator*\pos{pos}
\DeclareMathOperator*\gph{gph}
\DeclareMathOperator*\inte{int}

\DeclareMathOperator*\cl{cl}
\DeclareMathOperator*\proj{proj}

\DeclareMathOperator*\rg{rg}
\DeclareMathOperator*\lip{lip}
\DeclareMathOperator*\glimsup{g-limsup}

\jyear{2022}%

\theoremstyle{thmstyleone}%
\newtheorem{theorem}{Theorem}[section]
\newtheorem{lemma}[theorem]{Lemma}
\newtheorem{corollary}[theorem]{Corollary}
\newtheorem{proposition}[theorem]{Proposition}

\theoremstyle{thmstyletwo}%
\newtheorem{example}{Example}[section]%

\theoremstyle{thmstylethree}%
\newtheorem{definition}{Definition}[section]%

\raggedbottom

\begin{document}

\title[Projectional Coderivatives and Calculus Rules]{Projectional Coderivatives and Calculus Rules}

\author*[1]{\fnm{Wenfang} \sur{Yao}}\email{dorothyywf@gmail.com}

\author[2]{\fnm{Kaiwen} \sur{Meng}}\email{mengkw@swufe.edu.cn}

\author[3]{\fnm{Minghua} \sur{Li}}\email{minghuali20021848@163.com}

\author[1]{\fnm{Xiaoqi} \sur{Yang}}\email{mayangxq@polyu.edu.hk}

\affil*[1]{\orgdiv{Department of Applied Mathematics}, \orgname{The Hong Kong Polytechnic University}, \orgaddress{\street{Kowloon}, \city{Hong Kong}, \country{China}}}

\affil[2]{\orgdiv{School of Mathematics}, \orgname{Southwestern University of Finance and Economics}, \orgaddress{ \city{Chengdu}, \postcode{611130},  \country{China}}}

\affil[3]{\orgdiv{School of Mathematics and Big Data}, \orgname{Chongqing University of Arts and Sciences}, \orgaddress{\street{Yongchuan}, \city{Chongqing}, \postcode{402160}, \country{China}}}

\abstract{This paper is devoted to the study of a newly introduced tool, projectional coderivatives and the corresponding calculus rules in finite dimensional spaces. We show that when the restricted set has some nice properties, more specifically, it is a smooth manifold, the projectional coderivative can be refined as a fixed-point expression. We will also improve the generalized Mordukhovich criterion to give a complete characterization of the relative Lipschitz-like property under such a setting. Chain rules and sum rules are obtained to facilitate the application of the tool to a wider range of parametric problems. }

\keywords{Projectional coderivatives, calculus rules, generalized Mordukhovich criterion, relative Lipschitz-like property}

\pacs[MSC Classification]{49J53, 49K40, 58C07, 90C31}

\maketitle

\section{Introduction}\label{sect:intro}
Among various stability properties, the Lipschitz-like property (also known as the Aubin property) plays a central role and has deep consequences with other stability properties like error bounds, metric regularity and calmness, which are widely adopted in convergence analysis of iterative algorithms. 
It can be traced back to \cite{aubin1984lipschitz} and inherits the name to reveal its intrinsic nature as a local Lipschitzian behavior of multifunctions (\cite{mordukhovich2018variational}).
By virtue of the Mordukhovich criterion introduced in  \cite{mordukhovich1992sensitivity}, it can be fully captured by coderivatives, along with the graphical modulus described by the outer norm of coderivatives. This criterion provides a geometric perspective on local stability behaviors of the graph of the set-valued mapping.
The calculus rules of coderivatives widely adopted were largely initiated in \cite{mordukhovich1994generalized} and introduced in later monographs \cite{VaAn} and \cite{mordukhovich2006variational} under the assumptions of local boundedness and graph-convexity, in finite as well as infinite dimensions. These rules facilitate analyzing the Lipschitz-like property of broader types of problems. Abundant calculus fitting diverse parametric systems can also be found in \cite{mordukhovich2007coderivative, LevyMord2004, Huyen2016, lee2005quadratic}. For more introduction on parametric optimization problems, see monographs by \cite{bonnans2013perturbation,dontchev2009implicit,ioffe2017variational} and \cite{klatte2006nonsmooth}.

However, the Lipschitz-like property has an implicit prerequisite that the reference point lies in the interior of the domain, which can be observed via both the definition of the property and the local boundedness requirement of coderivatives in the Mordukhovich criterion. Besides, in practical problems, perturbations often come in structures or in specific directions. Thus over the years the relative stability has gained much attention. Most of these research employed the tool directional limiting coderivatives, introduced in \cite{gfrerer2013directional} with directional limiting calculus initiated in \cite{ginchev2011directionally}. 
For directionally Lipschitzian single-valued mappings and generalized directional derivatives, see \cite{clarke1990optimization}. In \cite{GfrOut2016} sufficient conditions were established for the calmness and the Lipschitz-like property of implicit multifunctions by using a directional limiting coderivative along with graphical derivative. For other stability properties relative to a set,  see \cite{bonnans2013perturbation} and \cite{van2015directional,ioffe2010regularity,arutyunov2006directional} for the relative metric regularity, \cite{mordukhovich2004restrictive} for the restrictive metric regularity and \cite{Benko2020} for the relative isolated calmness. 

Recently, sufficient conditions for the Lipschitz-like property relative to a closed set of the solution map for a class of parameterized variational systems were derived in \cite{Benko2020}. These conditions require computation of directional limiting coderivatives of the normal-cone mapping for the critical directions. 
However, they only provided sufficiency which failed to characterize the property fully, especially when the candidate point lies on the boundary. In \cite{Meng2020}, a new tool, the projectional coderivative,  was introduced and both sufficiency and necessity were provided for characterizing the Lipschitz-like property relative to a closed and convex set. This complete characterization is called the generalized Mordukhovich criterion, as when the candidate point lies in the interior of the relative set, it reduces to the Mordukhovich criterion. This verifiable condition also pins down the associated graphical modulus of the set-valued mapping relative to the closed and convex set with the outer norm of projectional coderivatives. The comparison between the tool in \cite{Meng2020} and that in \cite{Benko2020} was also demonstrated via examples both in \cite{Meng2020, yao2022relative}. 

For this newly acquainted tool, only a few properties and examples are presented due to its complicated nature. It involves interactions between normal cone of the set-valued mapping and projection onto the tangent cone of the set in the neighborhood. Carrying the projection into the outer limit brings difficulties to generalization of applicability of this tool in two ways: one is that the projected normal cone does not enjoy outer-semicontinuity as normal cone does. The other one is that the projection creates asymmetrical effects: the element corresponding to the domain part takes the projection while the element corresponding to the range part does not. In \cite{yao2022relative} the upper estimates of the projectional coderivative of the solution mapping for a parametric system were given and applied to specific linear complementarity problems and affine variational inequalities. It was shown that an equality is attainable under regularity condition when referring a smooth manifold within the domain. 

In this direction, we continue our research in analyzing the properties of projectional coderivatives relative to a smooth manifold. It turns out that, by virtue of the orthogonality between the tangent cone and the normal cone of smooth manifolds, the projected normal cone mapping is equal to an intersection and becomes outer-semicontinuous locally.  Therefore the projectional coderivative is shown to be an intersection of restricted coderivative and the tangent cone of the smooth manifold. A result in \cite{Daniilidis2011} shows that this intersection, when contains $0$ only, provides sufficiency of relative Lipschitz-like property. We revisit the sufficient and necessary conditions in \cite{Meng2020} and extend the generalized Mordukhovich criterion for this case. Such a sufficiency is proven to be a full characterization, along with other equivalences. 

The pursuit of this paper is two-fold. The first objective is to investigate the possibility of simplifying the expression of projectional coderivatives relative to some certain set: smooth manifolds and to extend the generalized Mordukhovich criterion under this setting. The other attention of this paper is paid to developing calculus rules for projectional coderivatives. We first develop the chain rule for composition mapping. Unlike the chain rule for coderivatives, here we require a stronger condition due to the projectional structure. A neater equation can be attained when both mappings are graph-convex and the relative set is a smooth manifold. We also mention two special cases: the outer or the inner mapping is single-valued. The first case can be derived naturally while the latter needs some extra effort in giving coderivative of a restricted single-valued mapping. Instead of applying the chain rule directly to obtain the sum rules, we develop the sum rules based on that of coderivatives to maintain tighter estimates considering the asymmetric nature of projectional coderivatives. Also, as the projectional coderivative involves the restricted mapping, the restriction can be imposed in different levels: either in summation mapping $S$ or the component mappings $S_i$. We develop two sum rules accordingly for user's convenience. The implication between the constraint qualifications in these two sum rules is illustrated by an example. 

The organization of the paper is as follows. \Cref{sect:Prelim} introduces the standard tools and notations in variational analysis and the tool projectional coderivatives along with the generalized Mordukhovich criterion. The geometric difference between the projectional coderivative and coderivative is also illustrated directly via an example along with a figure. Our work begins in \Cref{sect:PCandSM} by introducing some properties of smooth manifolds, mainly from the perspective of tangent cones and projections to give a fixed-point expression of projectional coderivatives relative to smooth manifolds. \Cref{sect:LipSM} extends the generalized Mordukhovich criterion and gives complete characterizations of the Lipschitz-like property relative to a smooth manifold.
\Cref{sect:chainrules} is devoted to obtaining the chain rule for projectional coderivatives and establish an equation for smooth manifolds similar to coderivatives.  Some special cases like when inner or outer layer of the function is single-valued are also discussed. Subsequently in \Cref{sect:sumrules} two sum rules are analyzed with different constraint qualifications. The difference is mainly caused by how we deal with restricting the mappings onto the set.

\section{Preliminaries}\label{sect:Prelim}

In this section, we review some notations, tools and corresponding properties extensively used throughout the paper. Most of these are standard in variational analysis and can be found in monographs \cite{mordukhovich2006variational,VaAn}. Readers who are familiar with these notations may safely skip this section. 

The closed unit ball and the unit sphere in $\mathbb{R}^n$ are denoted by $\mathbb{B}$ and $\Sph$ respectively. Given a nonempty set $C\subseteq \mathbb{R}^n$, the interior, the closure,  the boundary, and the positive hull of $C$ are denoted respectively by $\inte C$,  $\cl C$, $\bdry C$,  and $\pos C:=\{0\}\cup \{\lambda x \mid x\in C$, $\lambda>0\}$.
The projection mapping $\proj_C$ is defined by $$ {\rm proj}_C (x):=\{y\in C\mid \|y-x\|=d(x,C)\}, $$
where $d(x,C)$ is the distance from $x$ to $C$.
For a set $X\subset \mathbb{R}^n$, we denote the projection of $X$ onto $C$  by
$$
{\rm proj}_C X:=\{y\in C\mid \exists x\in X,\text{ s.t. } d(x,y) =d(x,C)\}.
$$
If $C=\emptyset$, by convention we set that $d(x, C):=+\infty$, ${\rm proj}_C(x):=\emptyset$, and ${\rm proj}_C X:=\emptyset$.

Let $x\in C$. We use $T_C(x)$ to denote the  tangent/contingent cone to $C$ at $x$, given by
$$T_C(x) = \limsup_{t\ \searrow  \ 0} \frac{C - x}{t}.$$
The regular/Fr\'{e}chet  normal cone, $\widehat{N}_C(x)$, is the polar cone of $T_C(x)$, defined by 
$$\widehat{N}_C(x) = \left\{ v\in \mathbb{R}^n \left\vert \ \limsup_{x'\xrightarrow[\neq]{C}x}  \frac{\langle v, x'-x \rangle}{\|x'- x\|} \leq 0 \right. \right\}.  $$
Here $x'\xrightarrow[\neq]{C}x  $ means $x' \to x$, $x'\in C$, $x'\neq x$.
The (basic/limiting/Mordukhovich) normal cone to $C$ at $x$, $N_C(x)$, is defined via the outer limit of $\widehat{N}_C$ 
as 
$$N_C(x) := \left\{ v \in \mathbb{R}^n \big\vert \exists \text{ sequences }x_k \xrightarrow[]{C} x,  \ v_k \rightarrow v, \ v_k \in \widehat{N}_C(x_k) ,\  \forall k\right\}. $$
We say that $C$ is locally closed at a point $x\in C$ if $C\cap U$ is closed for some closed neighborhood $U \in \mathcal{N}(x)$. 
$C$ is said to be regular at $x$ in the sense of Clarke if it is locally closed at $x$ and $\widehat{N}_C(x)=N_C(x)$.
For any $x \notin C$, we set by convention $T_C (x),  \widehat{N}_C(x), N_C(x)$ are all empty sets. 

Let $f:\mathbb{R}^n\to \overline{\mathbb{R}}:=\mathbb{R}\cup\{\pm \infty\}$ be an extended real-valued function and let $\bar{x}$ be  a point with $f(\bar{x})$ finite. The vector $v\in \mathbb{R}^n$ is a regular/Fr\'{e}chet subgradient of $f$ at $\bar{x}$, written $v\in \widehat{\partial} f(\bar{x})$, if
\[
f(x)\geq f(\bar{x})+\langle v, x-\bar{x}\rangle+o(||x-\bar{x}||).
\]
The vector $v\in \mathbb{R}^n$ is a (general/basic) subgradient of $f$ at $\bar{x}$, written $v\in \partial f(\bar{x})$, if there exist sequences $x_k\to \bar{x}$ and $v_k\to v$ with $f(x_k)\to f(\bar{x})$ and $v_k\in \widehat{\partial} f(x_k)$.  The subdifferential set $\partial f(\bar{x})$ is also referred to as limiting/Mordukhovich subdifferential.

\subsection{Notations for set-valued mappings}

For a set-valued mapping $S:\mathbb{R}^n\rightrightarrows \mathbb{R}^m$, we denote by
$\gph S:=\{(x, u)\mid u\in S(x)\}$ the graph of $S$ and $\dom S:=\{x\mid S(x)\not=\emptyset\}$ the domain of $S$. $S$ is said to be positively homogeneous if $\gph S$ is a cone. If $S$ is a positively homogeneous mapping, the outer norm  of $S$ is denoted and defined by
  \begin{equation}\label{eq:def-outernorm}
       \lvert S \rvert^+:=\sup_{x\in \mathbb{B}}\sup_{u\in S(x)}\|u\|.
  \end{equation}
The  (normal) coderivative and the regular/Fr\'{e}chet coderivative 
  of $S$ at $\bar{x}$ for any $\bar{u}\in S(\bar{x})$ are respectively the mapping $D^*S(\bar{x}\mid \bar{u}):\mathbb{R}^m\rightrightarrows \mathbb{R}^n$ defined by
  \begin{equation}\label{eq:def:Coderivatives}
      x^*\in D^*S(\bar{x}\mid \bar{u})(u^*)\Longleftrightarrow (x^*, -u^*)\in N_{\gph S}(\bar{x}, \bar{u}),
  \end{equation}
  the mapping $\widehat{D}^*S(\bar{x}\mid \bar{u}):\mathbb{R}^m\rightrightarrows \mathbb{R}^n$ defined by
  \[
  x^*\in \widehat{D}^*S(\bar{x}\mid \bar{u})(u^*)\Longleftrightarrow (x^*, -u^*)\in \widehat{N}_{\gph S}(\bar{x}, \bar{u}).
  \]
  Both of these mappings are positively homogeneous. 
 
   For a set $X\subset \mathbb{R}^n$, we denote the restricted mapping of $S$ on $X$ by
  \[
  S\vert_X:=\left\{
  \begin{array}{ll}
    S(x) & \mbox{if}\;x\in X, \\[0.2cm]
    \emptyset & \mbox{if}\;x\not\in X.
  \end{array}
    \right.
  \]
  It is clear to see that
  \[
  \gph S\vert_X=\gph S\cap (X\times \mathbb{R}^m)\quad\mbox{and}\quad \dom  S\vert_X=X\cap \dom S.
  \]
As mentioned, one of the primary goals of the paper is to investigate relative Lipschitz-like property. Accordingly, we introduce some properties relative to a set. 
\begin{definition}[local boundedness relative to a set,{\cite[p. 162]{VaAn}}]
For a mapping $S: \mathbb{R}^n \rightrightarrows \mathbb{R}^m$, a closed set $X \subset \mathbb{R}^n$ and a given point $\bar{x} \in X$, if for some neighborhood $V\in \mathcal{N}(\bar{x})$, $S(V\cap X)$ is bounded, we say $S$ is locally bounded relative to $X$ at $\bar{x}$. Such definition is equivalent to local boundedness of $S\vert_X$ at $\bar{x}$, where $S\vert_X$ means the mapping $S$ restricted to $X$.
\end{definition}

Similar to definition of regularity in \cite[Defnition 6.4]{VaAn}, here we introduce a local version:
\begin{definition}
  For a set $C\subseteq \mathbb{R}^n$, we say $C$ is regular around $\bar{x} \in C$ (in the sense of Clarke) if it is locally closed around $\bar{x}$ and there exists a neighborhood $X$ of $\bar{x}$, such that for any $x\in X \cap C$, $\widehat{N}_C (x) = N_C(x)$.
\end{definition}
\begin{definition}[Outer semicontinuity relative to a set, {\cite[Definition 5.4]{VaAn}}] A set-valued mapping $S: \mathbb{R}^n \rightrightarrows \mathbb{R}^m$ is outer semicontinuous (osc) at $\bar{x}$ relative to $X$ if $\bar{x} \in X$ and
   $\limsup_{\tiny x\xrightarrow[]{X}\bar{x}}S(x) = S(\bar{x}).$
Such a definition is equivalent to outer semicontinuity of $S\vert_X$ as $$\displaystyle \limsup_{\tiny x\xrightarrow[]{X}\bar{x}} S(x) = \limsup_{\tiny x\rightarrow  \bar{x}} S\vert_X(x) = S\vert_X(\bar{x}).$$
\end{definition}
\subsection{Relative Lipschitz-like property and generalized Mordukhovich criterion}
Next we present a central role in stability of $S$, the Lipschitz-like property, and the corresponding tool for the criterion. 
\begin{definition}[Lipschitz-like property relative to a set, {\cite[Definition 9.36]{VaAn}}]\label{def-lip-like}
A mapping $S: \mathbb{R}^n\rightrightarrows  \mathbb{R}^m$ has the Lipschitz-like property relative to $X$ at $\bar{x}$ for $\bar{u}$, where $\bar{x}\in X$ and $\bar{u}\in S(\bar{x})$, if $\gph S$ is locally closed at $(\bar{x}, \bar{u})$ and there are neighborhoods $V\in \mathcal{N}(\bar{x})$, $W\in \mathcal{N}(\bar{u})$, and a constant $\kappa \in \mathbb{R}_+$ such that
\begin{equation}\label{Ap-def}
S(x')\cap W\subset S(x)+\kappa\|x'-x\|\mathbb{B}, \quad \forall x,x'\in X\cap V.
\end{equation}
The graphical modulus of $S$ relative to $X$ at $\bar{x}$ for $\bar{u}$ is then
\[
\begin{array}{ll}
\lip_X S(\bar{x}\mid\bar{u}):=\inf\;\{\;\kappa\geq 0\mid  \exists V & \in \mathcal{N}(\bar{x}), W\in \mathcal{N}(\bar{u}),\;\mbox{such that}\\[0.2cm]
&S(x')\cap W\subset S(x)+\kappa\|x'-x\|\mathbb{B}, \quad \forall x,x'\in X\cap V\;\}.
\end{array}
\]
\end{definition}
The property with $V$ in place of $X\cap V$ in \eqref{Ap-def} is the Lipschitz-like property along with the graphical modulus $\lip S(\bar{x} \mid \bar{u})$. 
To characterize the relative Lipschitz-like property, in \cite{Meng2020}, a new tool, the projectional coderivative, is introduced. 

\begin{definition}[{\cite[Definition 2.2]{Meng2020}}]\label{def:projcode}
  $D^*_X S(\bar{x} \mid \bar{u}) : \mathbb{R}^m \rightrightarrows \mathbb{R}^n$  of $S: \mathbb{R}^n \rightrightarrows \mathbb{R}^m$ at $\bar{x} \in X$ for any $\bar{u} \in S(\bar{x})$ with respect to $X$ is defined as
  \begin{equation}\label{DefofProjCode}
  t^*\in D^*_{X}S(\bar{x}\mid \bar{u})(u^*)\Longleftrightarrow (t^*, -u^*)\in \limsup_{\tiny (x,u)\xrightarrow[]{\gph S\vert_X}(\bar{x}, \bar{u})} {\rm proj}_{T_X(x)\times \mathbb{R}^m}N_{\gph S\vert_X}(x,u).
  \end{equation}
\end{definition}
Here we give an example of the calculation on projectional coderivatives.

\begin{figure}[ht]
\centering
\includegraphics[scale=0.32]{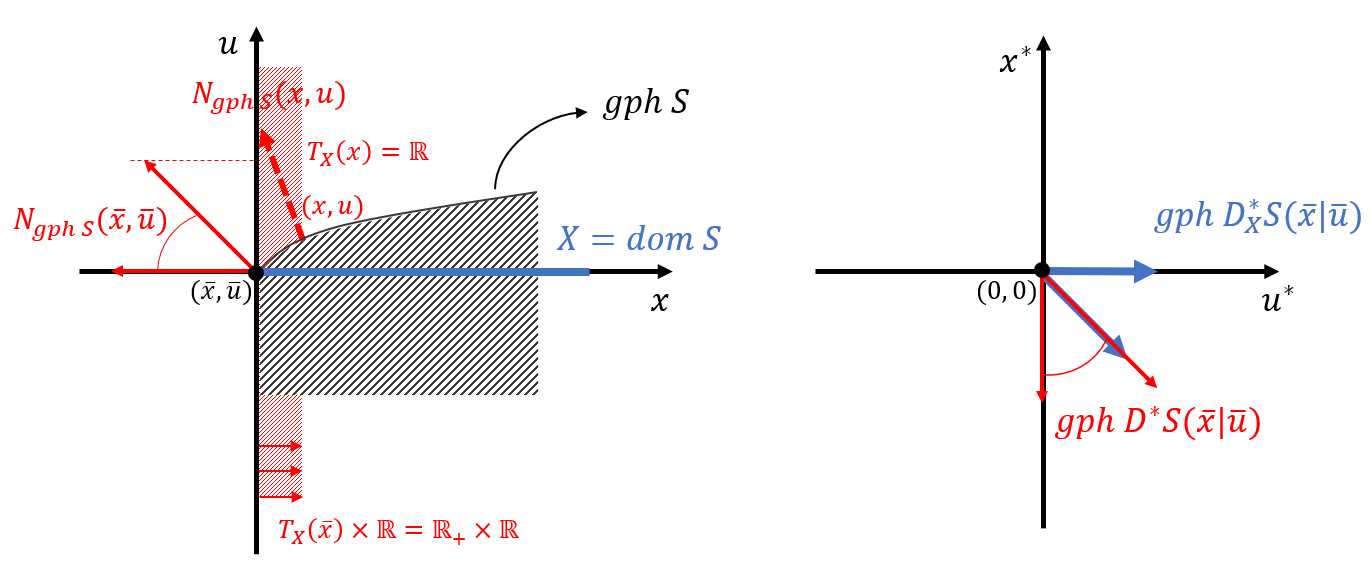}
\caption{The projectional coderivatives in \cref{ex:codevsprojcode}}
\end{figure}
\begin{example}\label{ex:codevsprojcode}
Let $$S(x) = \begin{cases}
    (-\infty, \frac{2}{1 + e^{-x}} -1 ], & \text{ if } x \geq 0. \\
    \emptyset, & \text{ else. }
\end{cases} $$
We consider the reference pair $(\bar{x}, \bar{u}) = (0, 0) \in \gph S$. $X = \dom S = \mathbb{R}_+$. Thus $\gph S\vert_X = \gph S$. For $(x,u) \xrightarrow[]{\gph S} (\bar{x},\bar{u}) $ we have the following three cases (trivial case omitted):
\begin{description}
    \item[1.] $x \in \mathbb{R}_{++}$: $T_X(x) = \mathbb{R}$ and 
    $$\displaystyle {\rm proj}_{T_X(x)\times \mathbb{R}} N_{\gph S}(x,u)  = N_{\gph S}(x,u) \rightarrow \mathbb{R}_+(-1, 1).$$
    \item[2.] $x =0$ with $u \in \mathbb{R}_{--}:$ $T_X (x) = \mathbb{R}_+ $ and $$\displaystyle {\rm proj}_{T_X(x)\times \mathbb{R}} N_{\gph S}(x,u)  = {\rm proj}_{\mathbb{R}_+ \times \mathbb{R}} \left( \mathbb{R}_- \times \{0\} \right) =  \{(0,0)\}.$$
    \item[3.] $(x, u ) = (\bar{x},\bar{u})$: $T_X (\bar{x}) = \mathbb{R}_+$, $N_{\gph S}(\bar{x}, \bar{u}) = \left\{ (x^*, u^*) \mid 0 \leq u^* \leq -x^* \right\}. $
    $$\displaystyle {\rm proj}_{T_X(\bar{x})\times \mathbb{R}} N_{\gph S}(\bar{x},\bar{u}) = \{0\} \times \mathbb{R}_+.$$
\end{description}

Thus by definition \eqref{DefofProjCode} we have 
\begin{align*}
   \gph D^*S(\bar{x}\mid\bar{u}) & = \left\{(u^*,x^*)\mid 0\leq u^* \leq -x^* \right\} \\
   \gph D^*_X S(\bar{x}\mid\bar{u}) & = \mathbb{R}_+\times \{0\} \cup \mathbb{R}_+(1, -1).
\end{align*}
\end{example}
Given this new tool, a handy test for the Lipschitz-like property relative to a closed and convex set is developed similarly to the Mordukhovich criterion, and is named as the generalized Mordukhovich criterion. 
\begin{theorem}[generalized Mordukhovich criterion, {\cite[Theorem 2.4]{Meng2020}}]\label{thm-genMordcri}
 Consider $S: \mathbb{R}^n\rightrightarrows  \mathbb{R}^m$, $\bar{x}\in X\subset \mathbb{R}^n$ and $\bar{u}\in S(\bar{x})$. Suppose that $\gph S$ is locally closed at $(\bar{x}, \bar{u})$ and that $X$ is closed and convex.  Then $S$ has the Lipschitz-like property relative to $X$ at $\bar{x}$ for $\bar{u}$ if and only if $D^*_{X}S(\bar{x}\mid\bar{u})(0)=\{0\}$
 or equivalently $\lvert D^*_X S(\bar{x}\mid\bar{u})\rvert ^+<+\infty$. In this case, 
${\rm lip}_X S(\bar{x}\mid\bar{u})=\lvert D^*_X S(\bar{x}\mid\bar{u}) \rvert^+.$
\end{theorem}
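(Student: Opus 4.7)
The plan is to prove the two implications of the equivalence separately, together with the quantitative identity $\lip_X S(\bar x\mid\bar u)=|D^*_XS(\bar x\mid\bar u)|^+$, by exploiting convexity of $X$ at the level of normal cones. The conceptual starting point is that, for closed convex $X$ and any $x\in X$, the tangent cone $T_X(x)$ is a closed convex cone whose polar is $N_X(x)$; consequently $\proj_{T_X(x)}(v)=\{0\}$ for every $v\in N_X(x)$, so the operator $\proj_{T_X(x)\times\mathbb{R}^m}$ annihilates exactly the $N_X(x)\times\{0\}$ contribution that the constraint $x\in X$ adds to $N_{\gph S|_X}$. In other words, the projectional coderivative keeps only the \emph{graphical} information about $S$ along directions tangent to $X$, and this is precisely the information the relative Lipschitz-like property should control.

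For necessity, which also yields $|D^*_XS(\bar x\mid\bar u)|^+\le\lip_XS(\bar x\mid\bar u)$, I would pick $t^*\in D^*_XS(\bar x\mid\bar u)(u^*)$ and unpack the outer limit in \eqref{DefofProjCode} to produce sequences $(x_k,u_k)\xrightarrow[]{\gph S|_X}(\bar x,\bar u)$ together with Fr\'echet normals $(\widehat t^*_k,-u^*_k)\in\widehat N_{\gph S|_X}(x_k,u_k)$ whose projections onto $T_X(x_k)\times\mathbb{R}^m$ tend to $(t^*,-u^*)$. Testing the Fr\'echet normal inequality against admissible directions $x-x_k$ with $x\in X$ near $x_k$, and feeding in the Lipschitz-like inclusion $S(x)\cap W\subset S(x_k)+\kappa\|x-x_k\|\mathbb{B}$ through a standard difference-quotient estimate, I would arrive at $\langle\widehat t^*_k,d\rangle\le\kappa\|u^*_k\|\,\|d\|$ for all $d\in T_X(x_k)$. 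This says $\|\proj_{T_X(x_k)}(\widehat t^*_k)\|\le\kappa\|u^*_k\|$, and passing to the limit gives $\|t^*\|\le\kappa\|u^*\|$; specializing to $u^*=0$ forces $D^*_XS(\bar x\mid\bar u)(0)=\{0\}$.

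For sufficiency I would argue by contradiction. Assuming $D^*_XS(\bar x\mid\bar u)(0)=\{0\}$ but that the Lipschitz-like property relative to $X$ fails for every $\kappa$, a standard selection supplies sequences $x_k,x'_k\in X$ near $\bar x$ and $u'_k\in S(x'_k)$ near $\bar u$ with $d(u'_k,S(x_k))>k\|x'_k-x_k\|$. Applying Ekeland's variational principle to $x\mapsto d(u'_k,S(x))$ on the closed set $X$ near $x_k$ produces perturbed near-minimizers, and the resulting first-order conditions provide Fr\'echet normals to $\gph S|_X$ at nearby points whose $x$-components, after projection onto $T_X$, dominate the $u$-components in a quantified way that blows up with $k$. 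Normalizing and extracting limits then yields a nonzero element of $D^*_XS(\bar x\mid\bar u)(0)$, a contradiction. The equivalence between $D^*_XS(\bar x\mid\bar u)(0)=\{0\}$ and $|D^*_XS(\bar x\mid\bar u)|^+<+\infty$ follows from positive homogeneity together with local closedness of the underlying graph by a routine cone argument.

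The step I expect to be the hard part is the interaction between $\proj_{T_X(x_k)\times\mathbb{R}^m}$ and the two outer limits (the one in the definition of $D^*_XS$ and the one generated by Ekeland): without convexity of $X$, the varying cones $T_X(x_k)$ need not relate to $T_X(\bar x)$ in a usable way, the projections need not converge to projections, and the annihilation of $N_X$-components used in the sufficiency argument can fail. Convexity of $X$ is precisely what repairs all three issues simultaneously, and is the reason the criterion takes such a clean form in this generalized setting.
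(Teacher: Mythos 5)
Your plan matches the architecture of the original proof in \cite{Meng2020} (which this paper quotes without reproving; its smooth-manifold analogue is carried out in \Cref{thm:AprtSM-criterion} via Lemmas \ref{lm:AprtSM-Nec} and \ref{lm:APrtSM-Suf}): necessity by testing Fr\'echet normals of $\gph S\vert_X$ against tangent directions through the Lipschitz-like inclusion to obtain $\|{\rm proj}_{T_X(x)}(x^*)\|\le\kappa\|u^*\|$, sufficiency by Ekeland's variational principle plus a normalization--contradiction step producing a nonzero element of $D^*_XS(\bar{x}\mid\bar{u})(0)$, and the outer-norm equivalence from positive homogeneity and outer semicontinuity. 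The only caution concerns your motivating claim that ${\rm proj}_{T_X(x)\times\mathbb{R}^m}$ ``annihilates exactly'' the $N_X(x)\times\{0\}$ contribution: projection onto a convex cone is nonlinear, so ${\rm proj}_{T_X(x)}(a+b)\neq{\rm proj}_{T_X(x)}(a)$ for $b\in N_X(x)$ in general (that identity holds only when $T_X(x)$ is a subspace, as exploited in \Cref{Prop:SM-ProjCode}), but this is harmless since your actual necessity step relies only on the correct identity $\|{\rm proj}_{K}(v)\|=\sup_{d\in K\cap\mathbb{B}}\langle v,d\rangle$ for a closed convex cone $K$.
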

From the definition \ref{def:projcode}, we can see that when $\bar{x}\in \inte X$, the projectional coderivative reduces to coderivatives naturally. Accordingly, the generalized Mordukhovich criterion becomes the Mordukhovich criterion as well (see \cite{mordukhovich1992sensitivity, VaAn}).

\section{Projectional coderivatives and properties of smooth manifolds}\label{sect:PCandSM}

The definition of projectional coderivatives involves normal cones, the projection, together with the process of taking outer limit (see \eqref{DefofProjCode}). Thus the calculation of it can be complicated and deviates from the commonly adopted coderivatives.
In this section, we are concerned with a multifunction $S: \mathbb{R}^n \rightrightarrows \mathbb{R}^m$ and a closed set $X \subseteq \mathbb{R}^n$ and the projectional coderivative of $S$ relative to a smooth manifold. We first introduce some properties of projection and some natural observations of projectional coderivatives. The special setting of $X$: smooth manifolds, allows us to simplify the expression $D^*_X S$ to a fixed-point expression. Some examples are given to illustrate the calculations. Below we first present an observation on the connection between projectional coderivatives and coderivatives, which is introduced in \cite{yao2022relative}. 
\begin{lemma}\label{lm:ProjCodeInverseAt0-Incl}\cite[Corollary 2.5]{yao2022relative}
  For a set valued-mapping $S: \mathbb{R}^n \rightrightarrows \mathbb{R}^m$, and a closed set $X \subseteq \mathbb{R}^n$, for any $(\bar{x}, \bar{u})\in \gph S\vert_X$,
  \begin{equation}\label{ProjCodeInverseAt0-Incl}
    D^*S\vert_X (\bar{x} \mid \bar{u})^{-1} (0) \subseteq D^*_X S (\bar{x} \mid \bar{u})^{-1}(0).
  \end{equation}
\end{lemma}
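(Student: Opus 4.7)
The plan is to unpack both sides of the claimed inclusion through Definitions \eqref{eq:def:Coderivatives} and \eqref{DefofProjCode}, reducing it to an elementary observation about the projection of $(0,-u^*)$ onto $T_X(\bar{x})\times\mathbb{R}^m$. First I would take an arbitrary $u^*\in D^*S\vert_X(\bar{x}\mid\bar{u})^{-1}(0)$ and rewrite this, via \eqref{eq:def:Coderivatives}, as the single statement $(0,-u^*)\in N_{\gph S\vert_X}(\bar{x},\bar{u})$. Dually, I would rewrite the target relation $u^*\in D^*_XS(\bar{x}\mid\bar{u})^{-1}(0)$ as the statement that $(0,-u^*)$ belongs to the outer limit appearing in \eqref{DefofProjCode} at $(\bar{x},\bar{u})$.

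The central step is a geometric observation at the reference point itself. Since $\bar{x}\in X$, the tangent cone $T_X(\bar{x})$ contains the origin, so $(0,-u^*)\in T_X(\bar{x})\times\mathbb{R}^m$. Hence, viewing $(0,-u^*)$ as an element of $N_{\gph S\vert_X}(\bar{x},\bar{u})$ and of $T_X(\bar{x})\times\mathbb{R}^m$ simultaneously, one has $\|(0,-u^*)-(0,-u^*)\|=0=d\bigl((0,-u^*),\,T_X(\bar{x})\times\mathbb{R}^m\bigr)$, so $(0,-u^*)$ is its own projection and therefore
\[
(0,-u^*)\in \proj_{T_X(\bar{x})\times\mathbb{R}^m} N_{\gph S\vert_X}(\bar{x},\bar{u}).
\]
Using the constant sequence $(x_k,u_k)\equiv(\bar{x},\bar{u})\in\gph S\vert_X$, this projected vector is trivially in the outer limit in \eqref{DefofProjCode}, which by definition means $0\in D^*_XS(\bar{x}\mid\bar{u})(u^*)$, i.e.\ $u^*\in D^*_XS(\bar{x}\mid\bar{u})^{-1}(0)$.

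There is no genuine obstacle here: the inclusion is really the statement that the point $(\bar{x},\bar{u})$ itself contributes to the outer limit defining $D^*_XS$, combined with the fact that the projection onto $T_X(\bar{x})\times\mathbb{R}^m$ acts as the identity on any vector whose first coordinate is $0$. The result is a one-sided inclusion only; the reverse fails in general because the outer limit on the right also picks up contributions from nearby pairs $(x,u)$ where $T_X(x)$ may be strictly larger than $T_X(\bar{x})$, as Example \ref{ex:codevsprojcode} already illustrates.
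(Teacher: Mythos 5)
Your argument is correct and complete: writing $0\in D^*S\vert_X(\bar{x}\mid\bar{u})(u^*)$ as $(0,-u^*)\in N_{\gph S\vert_X}(\bar{x},\bar{u})$, noting $0\in T_X(\bar{x})$ so that $(0,-u^*)$ is its own projection onto $T_X(\bar{x})\times\mathbb{R}^m$, and then using the constant sequence at $(\bar{x},\bar{u})$ to place it in the outer limit of \eqref{DefofProjCode} is exactly the natural route. The paper itself states this lemma by citation to \cite[Corollary 2.5]{yao2022relative} without reproducing a proof, but your reasoning is the standard direct verification and I see no gap; your closing remark on why the reverse inclusion fails is also consistent with \cref{ex:codevsprojcode}.
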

By definition of projectional coderivatives (Definition \ref{def:projcode}), it involves projecting normal cone of $\gph S$ onto $T_X(x) \times \mathbb{R}^n$ for all neighboring points. The projection onto the tangent cone enjoys continuity when the tangent cone mapping is also continuous, in a manner that the expression of projectional coderivative can be refined as a fixed point one.
Before introducing the exact form, we first present some basic properties of a smooth manifold in the following proposition. 
In what follows, let $X$ be a $d$-dimensional smooth manifold in $\mathbb{R}^n$ around the point $\bar{x}\in X$, in the sense that $X$ can be represented relative to an open neighborhood $O\in \mathcal{N}(\bar{x})$ as the set of solutions to $F(x)=0$, where $F:O\rightarrow \mathbb{R}^{n-d}$ is a smooth (i.e., $\mathcal{C}^1$) mapping with $\nabla F(\bar{x})$ of full rank $n-d$. This definition is borrowed from  \cite[Example 6.8]{VaAn}. For more thorough details of smooth manifold, see the monograph by \cite{lee2013smooth}.

\begin{proposition}[Basic properties of smooth manifolds]\label{Prop:SM-basic}
Let $X \in \mathbb{R}^n$ be a smooth manifold around $\bar{x}$. We have the following basic properties. 
\begin{description}
  \item[(a)] $X$ is regular around $\bar{x}$. The tangent and normal cones to $X$ at any $x$ being close enough to $\bar{x}$ are linear subspaces orthogonally complementary to each other, namely
  \[
  T_X(x)=\{w\in \mathbb{R}^n\mid \nabla F(x) w=0\}\quad\mbox{and}\quad N_X(x)=\{\nabla F(x)^* y\mid y\in \mathbb{R}^{n-d}\}.
  \]
  Moreover,
  \begin{equation*}
  {\rm proj}_{T_X(x)}(x^*)=\left[I-\nabla F(x)^*\left(\nabla F(x)\nabla F(x)^*\right)^{-1}\nabla F(x)\right]x^*, \quad\forall x^*.
  \end{equation*}
  \item[(b)] For any $x$ being close enough to $\bar{x}$ in $X$ and $x^*\in \mathbb{R}^n$, it holds that
  \[
  T_X(x_k)\to T_X(x)\quad\mbox{and}\quad {\rm proj}_{T_X(x_k)}(x^*_k)\to {\rm proj}_{T_X(x)}(x^*),
  \]
  where $\{x_k\}$ and $\{x_k^*\}$ are  two sequences such that $x_k {\xrightarrow[]{X}}x$ and $x_k^*\to x^*$.
  \item [(c)] For any $\varepsilon>0$, there exists some $\delta>0$ such that
  \[
  \displaystyle\left\|\;\frac{y-x}{\|y-x\|}-{\rm proj}_{T_X(x)}\left( \frac{y-x}{\|y-x\|} \right)\;\right\| \leq \varepsilon
  \]
  holds for all $x,y\in X\cap \mathbb{B}_{\delta}(\bar{x})$ with $x\not=y$.
\end{description}
\end{proposition}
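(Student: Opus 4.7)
The plan is to work directly from the defining representation $X\cap O = F^{-1}(0)$ and to exploit continuity of $\nabla F$ to propagate the full-rank hypothesis from $\bar{x}$ to a whole neighborhood. Throughout I restrict to a ball $\mathbb{B}_{\delta_0}(\bar{x})\subset O$ on which $\nabla F(x)$ has rank $n-d$; this is legitimate because the set of $(n-d)\times n$ matrices of maximal rank is open and $\nabla F$ is continuous. Consequently $x\mapsto(\nabla F(x)\nabla F(x)^{*})^{-1}$ is well-defined and continuous on that ball.

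For part (a), the tangent-cone formula $T_X(x)=\ker\nabla F(x)$ is the standard description of the tangent space of a smooth manifold: the implicit function theorem locally identifies $X$ with the graph of a $\mathcal{C}^1$ map, and differentiating this graph parametrization yields the kernel description. Since $T_X(x)$ is a linear subspace, its polar equals its orthogonal complement; by linearity this common set is simultaneously the regular and the limiting normal cone, which establishes Clarke regularity at and around $\bar{x}$ and gives $N_X(x)=(\ker\nabla F(x))^{\perp}=\rg(\nabla F(x)^{*})$. The explicit expression for $\proj_{T_X(x)}(x^{*})$ then follows from the standard least-squares identity, the subtracted term being precisely $\proj_{N_X(x)}(x^{*})$.

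For part (b), I would read off continuity directly from the formula in (a). The matrix-valued map $x\mapsto I - \nabla F(x)^{*}(\nabla F(x)\nabla F(x)^{*})^{-1}\nabla F(x)$ is continuous on $\mathbb{B}_{\delta_0}(\bar{x})$, so $\proj_{T_X(x_k)}(x_k^{*})\to \proj_{T_X(x)}(x^{*})$ whenever $x_k\to x$ in $X$ and $x_k^{*}\to x^{*}$. Painlev\'e--Kuratowski convergence $T_X(x_k)\to T_X(x)$ then follows by a standard two-sided argument: the outer-limit inclusion holds because any cluster point $w$ of a bounded sequence $w_k\in T_X(x_k)$ satisfies $\nabla F(x)w=\lim \nabla F(x_k)w_k=0$, hence lies in $T_X(x)$; the inner-limit inclusion holds because each $w\in T_X(x)$ is the limit of $\proj_{T_X(x_k)}(w)\in T_X(x_k)$, as just shown.

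The main obstacle is (c), where a pointwise Taylor expansion $F(y)-F(x)=\nabla F(x)(y-x)+o(\|y-x\|)$ would produce a modulus depending on $x$ and is therefore insufficient. I would instead use the integral form
\[
0 \;=\; F(y)-F(x) \;=\; \int_0^1 \nabla F\bigl(x+t(y-x)\bigr)(y-x)\,dt
\]
together with the uniform continuity of $\nabla F$ on the compact set $\cl\mathbb{B}_{\delta_0}(\bar{x})$. Subtracting $\nabla F(x)(y-x)$ and taking norms yields $\|\nabla F(x)(y-x)\|\leq \omega(\delta)\|y-x\|$ for all $x,y\in X\cap \mathbb{B}_{\delta}(\bar{x})$, where $\omega(\delta)\to 0$ as $\delta\to 0$. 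Combining with the projection formula from (a) gives
\[
\left\|\tfrac{y-x}{\|y-x\|}-\proj_{T_X(x)}\!\left(\tfrac{y-x}{\|y-x\|}\right)\right\| \;\leq\; C\,\omega(\delta),
\]
where $C$ is a uniform bound on $\|\nabla F(x)^{*}(\nabla F(x)\nabla F(x)^{*})^{-1}\|$ over $\cl\mathbb{B}_{\delta_0}(\bar{x})$, finite by continuity on a compact set. Shrinking $\delta$ so that $C\,\omega(\delta)\leq \varepsilon$ closes the argument.
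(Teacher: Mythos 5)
Your proposal is correct, and parts (b) and (c) take a genuinely different (and in places more self-contained) route than the paper. For (a) both you and the paper essentially defer to the standard facts; the only soft spot is your phrase ``by linearity this common set is simultaneously the regular and the limiting normal cone'' --- equality of $\widehat N_X(x)$ and $N_X(x)$ is not a consequence of linearity of the tangent space at the single point $x$, but of the continuity of $x'\mapsto \rg\nabla F(x')^*$ near $x$ (which your full-rank neighborhood and the explicit formulas do supply, so the gap is cosmetic; the paper itself only cites references here). For (b) the paper goes the other way around: it first gets $T_X(x)=\liminf_{x'\to x}T_X(x')$ from Clarke regularity via \cite[Corollary 6.29(b)]{VaAn}, proves the outer-limit inclusion from the kernel description, and then invokes \cite[Example 5.35]{VaAn} to pass from set convergence of $T_X(x_k)$ to convergence of the projections; you instead read the projection convergence directly off the continuous matrix $I-\nabla F(x)^*(\nabla F(x)\nabla F(x)^*)^{-1}\nabla F(x)$ and then deduce the set convergence from it, which avoids both citations. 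For (c) both arguments hinge on the same integral identity $\int_0^1\nabla F(x+t(y-x))\,dt\,(y-x)=F(y)-F(x)=0$, but the paper argues by contradiction, extracting a unit direction $w$ with $\nabla F(\bar x)w=0$ and passing to the limit, whereas you run the estimate directly with a uniform modulus of continuity $\omega$ of $\nabla F$ and a uniform bound $C$ on $\|\nabla F(x)^*(\nabla F(x)\nabla F(x)^*)^{-1}\|$ over a compact ball, obtaining the explicit bound $C\,\omega(2\delta)$. Your version buys a quantitative rate and dispenses with subsequence extraction; the paper's buys slightly less bookkeeping. Just make sure the closed ball on which you take $C$ and $\omega$ is contained in the region where $\nabla F$ has full rank, which a preliminary shrinking of $\delta_0$ guarantees.
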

\begin{proof}
  The properties in (a) then follows readily from \cite[Exercise 6.7, Example 6.8]{VaAn} and \cite[p. 365]{lang2016introduction}.

  For (b), with regularity, by \cite[Corollary 6.29 (b)]{VaAn}, 
  $ T_X(x) = \widehat{T}_X(x) = \liminf_{\tiny x'\xrightarrow[]{X}x}T_X(x') .$
  To prove continuity relative to $X$, it remains to prove 
  $  \limsup_{\tiny x'\xrightarrow[]{X}x}T_X(x') \subseteq T_X (x)$. 
  For $  w \in \limsup_{\tiny x'\xrightarrow[]{X}x}T_X(x')$, it is equivalent that there exist sequences $$x_k \xrightarrow[]{X} x \text{ and } w_k \in T_X(x_k) = \left\{ w\in \mathbb{R}^n \mid \nabla F(x_k) w =0 \right\}$$ such that $w_k \rightarrow w$.
  Given $\nabla F(x_k) \rightarrow \nabla F(x)$ when $x_k \rightarrow x$ and $\nabla F(x_k) w_k = 0$ , then  $\nabla F(x_k) w_k  \rightarrow \nabla F(x) w =0$ when $k\rightarrow \infty $, which shows $w \in T_X(x)$ and thus $T_X(\cdot)$ is continuous at $x$ relative to $X$  and always convex-valued. By \cite[Example 5.35]{VaAn}, we have 
  $${\rm proj}_{T_X(x_k)}(x^*_k)\to {\rm proj}_{T_X(x)}(x^*) \text{ for } x_k \xrightarrow[]{X}x \text{ and } x^*_k \rightarrow x^*.$$
  
  It remains to show (c). Suppose by contradiction that there exist  some $\varepsilon_0>0$  and some sequences $x_k,y_k { \xrightarrow[]{X}}\bar{x}$ with $x_k\not= y_k$  such that
  \begin{equation}\label{ProjPointClose-Contra}
    \displaystyle\left\|\;\frac{y_k-x_k}{\|y_k-x_k\|}-{\rm proj}_{T_X(x_k)} \left(\frac{y_k-x_k}{\|y_k-x_k\|} \right) \;\right\|> \varepsilon_0, \quad\forall k.
  \end{equation}
  Without loss of generality, we can assume that $x_k, y_k\in O$  with $\nabla F(x_k)$ of full rank $n-d$ for all $k$,  and that there is some $w\in \Sph$ such that $\frac{y_k-x_k}{\|y_k-x_k\|}\rightarrow w.$
It then follows that
\begin{equation}\label{SM-MeanValueApp}
 \int_0^1 \nabla F(\tau y_k+(1-\tau) x_k)\;d\tau \cdot \frac{y_k-x_k}{\|y_k-x_k\|}=\frac{F(y_k)-F(x_k)}{\|y_k-x_k\|}=0,\quad \forall k,
\end{equation}
where the integral of a matrix is to be understood componentwise. Applying componentwise the first mean value theorem for definite integrals, we have
\[
\int_0^1 \nabla F(\tau y_k+(1-\tau) x_k)\;d\tau \rightarrow \nabla F(\bar{x}),
\]
and hence
\[
\int_0^1 \nabla F(\tau y_k+(1-\tau) x_k)\;d\tau \cdot \frac{y_k-x_k}{\|y_k-x_k\|}\rightarrow \nabla F(\bar{x})w.
\]
In view of (\ref{SM-MeanValueApp}), we have $ \nabla F(\bar{x})w=0. $
As  $\nabla F(x_k)$ is of full row rank, we have
$T_X(x_k)=\{w\mid \nabla F(x_k) w=0\},$ 
and hence
\begin{align*}
      & \frac{y_k-x_k}{\|y_k-x_k\|}-{\rm proj}_{T_X(x_k)} \left(\frac{y_k-x_k}{\|y_k-x_k\|} \right) \\
    = &    \  \nabla F(x_k)^*\left(\nabla F(x_k)\nabla F(x_k)^*\right)^{-1}\nabla F(x_k) \cdot \frac{y_k-x_k}{\|y_k-x_k\|}  \\
     \rightarrow  & \  \nabla F(\bar{x})^*\left(\nabla F(\bar{x})\nabla F(\bar{x})^*\right)^{-1}\nabla F(\bar{x})w=0,
\end{align*}
contradicting to (\ref{ProjPointClose-Contra}). This completes the proof. 
\end{proof}
These local properties of smooth manifolds facilitate us in reducing the projectional coderivative expression of $S$ relative to a smooth manifold to a fixed-point expression. In the next proposition we show how the normal cone of $\gph S$ restricted to $X$ interacts with the tangent cone $T_X$. 
\begin{proposition}[Projectional coderivatives of a set-valued mapping on a smooth manifold]\label{Prop:SM-ProjCode}
Consider $S: \mathbb{R}^n\rightrightarrows  \mathbb{R}^m$ and  $\bar{u}\in S(\bar{x})$.   Suppose that $\gph S$ is locally closed at $(\bar{x}, \bar{u})$ and $X$ is a smooth manifold around $\bar{x}$ with $\bar{x}\in X$. 
The following properties hold
 for all $(x,u)$ close enough to $(\bar{x},\bar{u})$ in $\gph S\vert_X$:
 \begin{description}
   \item[(a)] ${\rm proj}_{T_X(x)\times \mathbb{R}^m} \widehat{N}_{\gph S\vert_X}(x,u)=\widehat{N}_{\gph S\vert_X}(x,u)\cap \left( T_X(x)\times \mathbb{R}^m\right) $.
   \item[(b)] ${\rm proj}_{T_X(x)\times \mathbb{R}^m} N_{\gph S\vert_X}(x,u)=N_{\gph S\vert_X}(x,u)\cap \left( T_X(x) \times \mathbb{R}^m \right)$.
   \item[(c)] $D^*_X S(x\mid u)(u^*)={\rm proj}_{T_X(x)}D^*S\vert_X(x\mid u)(u^*)=D^*S\vert_X(x\mid u)(u^*)\cap T_X(x),\quad \forall u^*$.
   \end{description}
\end{proposition}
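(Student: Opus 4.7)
The plan is to prove the three parts in order, with (a) carrying the core geometric content and (b), (c) following as bookkeeping via the continuity results of \Cref{Prop:SM-basic}. Part (a) is the regular-normal version of the identity; (b) lifts it to the limiting normal cone through the sequential definition $N_{\gph S\vert_X}=\limsup \widehat{N}_{\gph S\vert_X}$; and (c) then reads (b) off inside the outer limit defining $D^*_X S$ in \Cref{def:projcode}.

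For (a), the inclusion $\supseteq$ is immediate since any element of $T_X(x)\times \mathbb{R}^m$ projects to itself. For the reverse inclusion, I take $(v,w)\in \widehat{N}_{\gph S\vert_X}(x,u)$ and decompose $v = {\rm proj}_{T_X(x)}(v)+n$ with $n\in N_X(x)$ using the orthogonal complement structure in \Cref{Prop:SM-basic}(a); the projection of $(v,w)$ onto $T_X(x)\times \mathbb{R}^m$ is then $({\rm proj}_{T_X(x)}(v),w) = (v,w)-(n,0)$. The decisive step is to show that both $(n,0)$ and $(-n,0)$ lie in $\widehat{N}_{\gph S\vert_X}(x,u)$: the first comes from the standard sum inclusion $\widehat{N}_{A\cap B}(z)\supseteq \widehat{N}_A(z)+\widehat{N}_B(z)$ applied to $A=\gph S$, $B=X\times \mathbb{R}^m$ (using regularity of the smooth manifold to identify $\widehat{N}_X(x)=N_X(x)$), and the second uses that $N_X(x)$ is a linear subspace, not just a cone. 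Adding $(-n,0)$ to $(v,w)$ inside the convex cone $\widehat{N}_{\gph S\vert_X}(x,u)$ produces $({\rm proj}_{T_X(x)}(v),w)$, which also sits in $T_X(x)\times \mathbb{R}^m$.

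For (b), I take $(v,w)=({\rm proj}_{T_X(x)}(v'),w')$ with $(v',w')\in N_{\gph S\vert_X}(x,u)$, fetch approximating sequences $(x_k,u_k)\xrightarrow[]{\gph S\vert_X}(x,u)$ and $(v'_k,w'_k)\in \widehat{N}_{\gph S\vert_X}(x_k,u_k)$ tending to $(v',w')$, apply (a) at each index, and pass to the limit via ${\rm proj}_{T_X(x_k)}(v'_k)\to {\rm proj}_{T_X(x)}(v')$ from \Cref{Prop:SM-basic}(b). This places $({\rm proj}_{T_X(x)}(v'),w')$ in $N_{\gph S\vert_X}(x,u)\cap (T_X(x)\times \mathbb{R}^m)$; the reverse inclusion is automatic. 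For (c), I substitute (b) into the limsup in \Cref{def:projcode}, then collapse it to $N_{\gph S\vert_X}(x,u)\cap (T_X(x)\times \mathbb{R}^m)$ by outer semicontinuity of the limiting normal cone together with the continuity of $T_X(\cdot)$ relative to $X$ from \Cref{Prop:SM-basic}(b). Rewriting the intersection as $D^* S\vert_X(x\vert u)(u^*)\cap T_X(x)$ and reapplying (b) identifies it with ${\rm proj}_{T_X(x)}D^* S\vert_X(x\vert u)(u^*)$, yielding all three equal expressions.

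The main obstacle is the $\subseteq$ direction of (a): in general, altering the $X$-slot of a regular normal vector to $\gph S\vert_X$ while remaining in $\widehat{N}_{\gph S\vert_X}$ fails, and it succeeds here only because $N_X(x)$ at a smooth manifold is a linear subspace (so the offset $v-{\rm proj}_{T_X(x)}(v)$ can be freely subtracted). Everything after (a) is mechanical limit-passing, powered by the continuity of the projection onto $T_X$ established in \Cref{Prop:SM-basic}(b).
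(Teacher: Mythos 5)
Your proof is correct and follows essentially the same route as the paper: the same orthogonal decomposition $v={\rm proj}_{T_X(x)}(v)+n$ with $\pm n\in N_X(x)$ drives part (a), and parts (b) and (c) are the same limit-passing arguments via \Cref{Prop:SM-basic}(b) and outer semicontinuity of the limiting normal cone. The only cosmetic difference is that in (a) you add $(-n,0)$ using convexity of the regular normal cone together with $\widehat{N}_{X\times \mathbb{R}^m}(x,u)\subseteq \widehat{N}_{\gph S\vert_X}(x,u)$, whereas the paper verifies the resulting Fr\'{e}chet inequality for $({\rm proj}_{T_X(x)}(v),w)$ directly from the definition.
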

\begin{proof} In what follows, let $(x,u)$ be close enough to $(\bar{x},\bar{u})$ in $\gph S\vert_X$ such that the properties in  \Cref{Prop:SM-basic} (a) and (b) holds.

To prove (a), it suffices to show
 \begin{equation}\label{incl:proj-RNC}
    {\rm proj}_{T_X(x) \times \mathbb{R}^m} \widehat{N}_{\gph S\vert_X} (x,u) \subset \widehat{N}_{\gph S\vert_X} (x,u)\cap \left( T_X(x)\times \mathbb{R}^m \right).
  \end{equation}
Let $(y^*, u^*)$ belong to the left-hand side of (\ref{incl:proj-RNC}).  Then there exists some $x^*$ such that $y^* = {\rm proj}_{T_X(x)}(x^*)$ and  $(x^*, u^*) \in   \widehat{N}_{\gph S\vert_X} (x,u)$. Then by definition we have
  \begin{equation}\label{deffromRNC}
  \limsup_{\tiny (x', u')\xrightarrow[(x',u')\not=(x,u)]{\gph S\vert_X}(x, u)} \frac{\langle (x^*, u^* ),\;(x'-x, u'-u)\rangle}{\|(x'-x, u'-u)\|}\leq 0.
  \end{equation}
  Let $z^*:={\rm proj}_{N_X(x)}( x^*)$. As $N_X(x)$ is a linear subspace (see \Cref{Prop:SM-ProjCode} (a)), we  have $\pm z^* \in N_X(x)$. This implies that $
  \lim_{\tiny x'\xrightarrow[x'\neq x]{X}x} \frac{\langle z^*,x'-x\rangle}{\|x'-x\|}=0, $
  and hence that
  \begin{equation}\label{deffromRNC2}
  \lim_{\tiny (x', u')\xrightarrow[(x',u')\neq (x, u)]{\gph S\vert_X}(x, u)} \frac{\langle z^*,x'-x\rangle}{\|(x'-x, u'-u)\|}= 0.
  \end{equation}
By \cite[Example 12.22]{VaAn}, $x^*=y^*+z^*$ and such a representation is unique. It follows from (\ref{deffromRNC}) and (\ref{deffromRNC2}) that
\[
\limsup_{\tiny (x', u')\xrightarrow[(x', u')\neq (x, u)]{\gph S\vert_X}(x, u)} \frac{\langle (y^*, u^* ),\;(x'-x, u'-u)\rangle}{\|(x'-x, u'-u)\|}
 \leq 0,
 \]
 which amounts to that $(y^*, u^*)\in \widehat{N}_{\gph S\vert_X} (x,u)$. From the fact that $y^*\in T_X(x)$,  it then follows that  $(y^*, u^*)$ belongs to the right-hand side of (\ref{incl:proj-RNC}).

 To prove (b), it suffices to show
  \begin{equation}\label{incl:proj-NC}
      {\rm proj}_{T_X(x) \times \mathbb{R}^m} N_{\gph S\vert_X} (x,u)\subset N_{\gph S\vert_X} (x,u)\cap \left( T_X(x)\times \mathbb{R}^m \right).
  \end{equation}
  Let $(y^*, u^*)$ belong to the left-hand side of (\ref{incl:proj-NC}). Then there exists $x^*$ such that $y^*={\rm proj}_{T_X(x)}(x^* )$ and  $(x^*, u^*) \in N_{\gph S\vert_X}(x,u)$.
  By definition there are sequences $(x_k, u_k) \xrightarrow[]{\gph S\vert_X} (x, u)$ and $(x^*_k , u^*_k)\in \widehat{N}_{\gph S\vert_X}(x_k, u_k)$ such that $(x^*_k , u^*_k) \rightarrow (x^*, u^*)$. In view of (a), we have for all sufficiently large $k$,
  \[
   \left({\rm proj}_{T_X(x_k)}(x^*_k),\, u^*_k\right)\in \widehat{N}_{\gph S\vert_X}(x_k, u_k).
  \]
  By \Cref{Prop:SM-basic} (b), we have ${\rm proj}_{T_X(x_k)}(x^*_k)\to {\rm proj}_{T_X(x)}(x^* )=y^*$. Then by definition we have $(y^*, u^*)\in N_{\gph S\vert_X}(x, u)$. 
 From the fact that $y^*\in T_X(x)$,  it then follows that $(y^*, u^*)$ belongs to the right-hand side of (\ref{incl:proj-NC}).
  Let $u^*\in \mathbb{R}^m$. From (b) and the definitions of coderivatives \eqref{eq:def:Coderivatives} and projectional coderivatives \eqref{DefofProjCode}, we get
  \[
    D^*_XS(x\mid u)(u^*)\supset {\rm proj}_{T_X(x)}D^*S\vert_X(x\mid u)(u^*)=D^*S\vert_X(x\mid u)(u^*)\cap T_X(x).
  \]
  To show (c), it suffices to show
   \begin{equation}\label{incl:projCode}
       D^*_XS(x\mid u)(u^*)\subset D^*S\vert_X(x\mid u)(u^*)\cap T_X(x).
   \end{equation}
  Let $y^*$ belong to the left-hand side of (\ref{incl:projCode}). Then by definition there are some $(x_k,u_k)\xrightarrow[]{\gph S\vert_X}(x,u)$ and
  $x_k^*\in D^*S\vert_X(x_k\mid u_k)(u_k^*)$ such that $u_k^*\to u^*$ and $y_k^*:={\rm proj}_{T_X(x_k)}(x_k^*)\to y^*$. By (b), we have for all sufficiently large $k$,
  \[
   y_k^*\in D^*S\vert_X(x_k\mid u_k)(u_k^*) \cap T_X(x_k),
  \]
implying that $y^*\in D^*S\vert_X(x\mid u)(u^*)$.  As $y_k^*\in T_X(x_k)$, we get from \Cref{Prop:SM-basic} (b) that $y^*\in T_X(x)$. That is, $y^*$ belongs to the right-hand side of (\ref{incl:projCode}). This completes the proof.
\end{proof}

Next we give a simple example for geometric interpretation of the results in \Cref{Prop:SM-ProjCode}. 
\begin{example}
Consider a multifunction $S: \mathbb{R}^2 \rightrightarrows \mathbb{R}^2$ defined as 
$$S\left( (x_1,x_2)^\top \right)  = \begin{cases}
   \{(0, -x_2)^\top \}, &  x_1\neq 0 \\
   \mathbb{R}^2, & x_1 = 0
\end{cases}. $$ For $X =\mathbb{R}\times \{1\} \subseteq \dom S = \mathbb{R}^2$ and $\bar{x} =(0,1)^\top$, $\bar{u} =(0,0)^\top$, $(\bar{x},\bar{u}) \in \gph S\vert_X$. By calculation we have 
$N_{\gph S\vert_X}(\bar{x}\mid \bar{u}) = \mathbb{R}^2 \times \{0_2\}$, $T_X(\bar{x}) = \mathbb{R}\times \{0\}$
and 
$$D^*S\vert_X(\bar{x}\mid \bar{u})(u^*)  = \begin{cases}
  \mathbb{R}^2,    & u^* =(0, 0)^\top\\
  \emptyset, & u^* \neq (0, 0)^\top
\end{cases}.$$
Then we can see that 
\begin{equation*}
\begin{aligned}
        & {\rm proj}_{T_X(x)\times \mathbb{R}^2} N_{\gph S\vert_X}(\bar{x}\mid \bar{u})= {\rm proj}_{\mathbb{R}\times \{0\} \times \mathbb{R}^2}\left( \mathbb{R}^2 \times \{0_2\} \right) \\
    = & \ \mathbb{R}\times \{0_3\} =  N_{\gph S\vert_X}(\bar{x}\mid \bar{u})\cap \left( T_X(\bar{x}) \times \mathbb{R}^m \right)
\end{aligned}
\end{equation*}
and 
\begin{equation*}
    \begin{aligned}
       D^*_X S(\bar{x}\mid \bar{u})(u^*)={\rm proj}_{T_X(\bar{x})}D^*S\vert_X(\bar{x}\mid \bar{u})(u^*) & = 
       \begin{cases}
          \mathbb{R} \times \{0\},    & u^* =(0, 0)^\top\\
          \emptyset, & u^* \neq (0, 0)^\top
       \end{cases}  \\ 
      & =  D^*S\vert_X(\bar{x}\mid \bar{u})\cap T_X(\bar{x}). 
    \end{aligned} 
\end{equation*}
\end{example}
For coderivatives defined as in \eqref{eq:def:Coderivatives}, we know that by outer semicontinuity of normal cone mappings$$ D^*S(\bar{x}\mid\bar{u}) = \glimsup_{(x,u)\xrightarrow[]{\gph S}(\bar{x},\bar{u})  }D^* S(x\mid u).$$  It is natural to ask if projectional coderivatives have such a property as well. The coming corollary is a natural observation from Propositions \ref{Prop:SM-basic} and \ref{Prop:SM-ProjCode}, and \cite[Lemma 3.3]{yao2022relative}. 
\begin{corollary}\label{coro:Osc-ProjCode}
Consider $S: \mathbb{R}^n\rightrightarrows  \mathbb{R}^m$ and  $\bar{u}\in S(\bar{x})$.   Suppose that $\gph S$ is locally closed at
 $(\bar{x}, \bar{u})$, $\bar{x}\in X$ and $X$ is a smooth manifold around $\bar{x}$ with $X\subseteq \dom S$. Then the mapping 
 $(x,u) \mapsto \proj_{T_X(x) \times \mathbb{R}^m} N_{\gph S\vert_X} (x,u)$ is outer semicontinuous relative to $\gph S\vert_X$ at $(\bar{x}, \bar{u})$ and 
 \begin{equation}\label{eqn:osc-projcode}
     D^*_X S(\bar{x} \mid \bar{u}) = \glimsup_{(x,u)\xrightarrow[]{\gph S\vert_X}(\bar{x},\bar{u})  }D^*_X S(x\mid u) = {\rm proj}_{T_X(\bar{x})} D^* S\vert_X(\bar{x} \mid \bar{u}) .
 \end{equation}
\end{corollary}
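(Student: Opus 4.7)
The plan is to first establish the outer semicontinuity claim, and then derive the two equalities in \eqref{eqn:osc-projcode} as fairly direct consequences, combined with the pointwise formula from \Cref{Prop:SM-ProjCode}(c). For the outer semicontinuity of $(x,u)\mapsto \proj_{T_X(x)\times\mathbb{R}^m} N_{\gph S\vert_X}(x,u)$ relative to $\gph S\vert_X$, I would take a sequence $(x_k, u_k) \xrightarrow[]{\gph S\vert_X}(\bar{x},\bar{u})$ together with $(a_k, b_k) \to (a, b)$ satisfying $(a_k, b_k) \in \proj_{T_X(x_k)\times \mathbb{R}^m} N_{\gph S\vert_X}(x_k, u_k)$, and show that $(a,b)$ belongs to the corresponding set at $(\bar{x},\bar{u})$. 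Applying \Cref{Prop:SM-ProjCode}(b) once $k$ is large enough so that $(x_k,u_k)$ falls within the neighborhood of validity, the projected set coincides with $N_{\gph S\vert_X}(x_k,u_k)\cap(T_X(x_k)\times \mathbb{R}^m)$. Outer semicontinuity of the basic normal cone on the locally closed set $\gph S\vert_X$ then gives $(a,b) \in N_{\gph S\vert_X}(\bar{x},\bar{u})$, while the continuity of $T_X$ relative to $X$ from \Cref{Prop:SM-basic}(b) yields $a \in T_X(\bar{x})$. A second invocation of \Cref{Prop:SM-ProjCode}(b) at $(\bar{x},\bar{u})$ delivers the desired membership.

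For the displayed identities, the third equality $D^*_X S(\bar{x}\mid\bar{u}) = \proj_{T_X(\bar{x})}D^*S\vert_X(\bar{x}\mid\bar{u})$ is immediate from \Cref{Prop:SM-ProjCode}(c) specialized at $(\bar{x},\bar{u})$ itself. For the middle equality I would unpack the definition of $\glimsup$: a pair $(u^*,t^*)$ belongs to $\gph\glimsup D^*_X S(x \mid u)$ iff there exist $(x_k,u_k)\xrightarrow[]{\gph S\vert_X}(\bar{x},\bar{u})$ and $(u^*_k,t^*_k)\to(u^*,t^*)$ with $t^*_k\in D^*_X S(x_k\mid u_k)(u^*_k)$. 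Via \Cref{Prop:SM-ProjCode}(c), the last membership is equivalent, for large $k$, to $(t^*_k,-u^*_k)\in \proj_{T_X(x_k)\times\mathbb{R}^m} N_{\gph S\vert_X}(x_k,u_k)$. Hence $(u^*,t^*)$ is in $\gph\glimsup D^*_X S(x\mid u)$ iff $(t^*,-u^*)$ lies in the outer limit of $\proj_{T_X(\cdot)\times\mathbb{R}^m} N_{\gph S\vert_X}(\cdot,\cdot)$ along $\gph S\vert_X$ at $(\bar{x},\bar{u})$, which is precisely the defining relation $t^*\in D^*_X S(\bar{x}\mid\bar{u})(u^*)$ from \Cref{def:projcode}.

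The main subtlety is that outer semicontinuity is generally not preserved under intersections of set-valued mappings, so one cannot simply invoke osc of $N_{\gph S\vert_X}$ and of the tangent-cone factor separately. What makes the argument work here is the smooth-manifold hypothesis: \Cref{Prop:SM-basic}(b) provides genuine continuity (not merely osc) of $T_X$ relative to $X$, so the linear subspaces $T_X(x_k)$ converge to $T_X(\bar{x})$ in the Painlev\'e--Kuratowski sense and the intersection with the osc normal cone behaves well under limits. The remainder is bookkeeping around the definitions of $D^*_X$ and $\glimsup$, together with the fact that inclusion of the constant sequence $(x_k,u_k)=(\bar{x},\bar{u})$ automatically yields the reverse inclusion $\proj_{T_X(\bar{x})\times\mathbb{R}^m}N_{\gph S\vert_X}(\bar{x},\bar{u})\subseteq \limsup$.
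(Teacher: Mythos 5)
Your argument is correct and follows exactly the route the paper intends: its proof of this corollary is a one-line appeal to \Cref{Prop:SM-basic}, \Cref{Prop:SM-ProjCode}, and the outer semicontinuity of the normal cone mapping on the locally closed set $\gph S\vert_X$, which is precisely the combination you spell out (rewriting the projected set as the intersection via \Cref{Prop:SM-ProjCode}(b), passing to the limit using continuity of $T_X$ from \Cref{Prop:SM-basic}(b), and then unpacking the definitions of $D^*_X$ and $\glimsup$ via \Cref{Prop:SM-ProjCode}(c)). Your observation about why the intersection survives the limit — genuine continuity of $T_X$ rather than mere osc — is the right justification and is implicit in the paper's citation of \Cref{Prop:SM-basic}(b).
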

\begin{proof}
 This result is simply an application of \Cref{Prop:SM-basic}, \Cref{Prop:SM-ProjCode} and the outer semicontinuity of normal cone mapping. 
\end{proof}

We next show by a smooth function that the calculation of projectional coderivative may not be as simple as the coderivatives as the projection of normal cone does not enjoy outer semicontinuity unless the set $X$ has some nice structures. For example, when $X$ is a closed half-space, the projectional coderivative becomes either a line segment or a set containing two points (see \cite[Remark 2.2]{Meng2020}). 
\begin{lemma}
[Projectional coderivatives of a smooth function] \label{lm:ProjCodeofSmoothFunc}
  For $F: \mathbb{R}^n \rightarrow \mathbb{R}^m$ being smooth and single-valued on $\mathbb{R}^n$ and a closed set $X\subseteq \mathbb{R}^n$, for any $\bar{x} \in \bdry X$,
  \begin{equation}\label{ProjCodeofSmoothFunc}
    D^*_X F(\bar{x})(y) =  \limsup_{\tiny x\xrightarrow[]{X}\bar{x}, y'\xrightarrow[]{} y} \left\{   {\rm proj}_{T_X(x)}\left( \nabla F(x)^*y' + w\right) \mid w \in N_X(x) \right\}.
  \end{equation}
   If furthermore $X$ is regular around $\bar{x}$,
  \begin{equation}\label{ProjCodeofSmoothFunc2}
    D^*_X F(\bar{x})(y) =  \limsup_{\tiny x\xrightarrow[]{X}\bar{x}} \left\{   {\rm proj}_{T_X(x)}\left( \nabla F(x)^*y + w\right) \mid w \in N_X(x) \right\}.
  \end{equation}
  In particular when $X$ is a smooth manifold around $\bar{x}$,
  \begin{equation}\label{ProjCodeofSmoothFunc-fixed}
    D^*_X F(\bar{x})(y) = {\rm proj}_{T_X(\bar{x})}\left( \nabla F(\bar{x})^*y \right).
  \end{equation}
\end{lemma}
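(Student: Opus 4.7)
The strategy is to reduce the lemma to an explicit computation of the normal cone to $\gph F\vert_X$ in terms of $\nabla F$ and the normal cone to $X$, and then carry the projection onto $T_X(x)\times\mathbb{R}^m$ inside the outer limit that defines the projectional coderivative. Since $F$ is of class $C^1$, for every $x\in X$ near $\bar x$ the contingent cone to $\gph F\vert_X$ is
\[
T_{\gph F\vert_X}(x,F(x)) = \{(d,\nabla F(x)d)\mid d\in T_X(x)\},
\]
and taking polars together with the identity $T_X(x)^{\circ}=\widehat N_X(x)$ (valid for any closed set at any of its points) yields
\[
\widehat{N}_{\gph F\vert_X}(x,F(x)) = \bigl\{(a,b)\bigm| a+\nabla F(x)^{*} b\in \widehat{N}_X(x)\bigr\}.
\]
Passing to the outer limit through this affine relation --- using continuity of $\nabla F$ together with the very definition $N_X(x)=\limsup_{x'\to x,\,x'\in X}\widehat N_X(x')$ --- upgrades the same formula to the limiting normal cone:
\[
N_{\gph F\vert_X}(x,F(x)) = \bigl\{(-\nabla F(x)^{*} b + w,\,b)\bigm| b\in\mathbb{R}^m,\,w\in N_X(x)\bigr\}.
\]

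Projecting the first factor onto $T_X(x)$ and substituting into Definition \ref{def:projcode}, with the limsup simplifying to $x\xrightarrow[]{X}\bar x$ because $F$ is single-valued and continuous, delivers \eqref{ProjCodeofSmoothFunc} after relabeling $b=-u^{*}$. To obtain \eqref{ProjCodeofSmoothFunc2} under Clarke regularity around $\bar x$, each $T_X(x)$ for $x$ near $\bar x$ coincides with the Clarke tangent cone and is therefore a closed \emph{convex} cone, so projection onto it is one-Lipschitz. Given any sequence $(x_k,y_k,w_k)$ realizing a point $t^{*}$ on the right-hand side of \eqref{ProjCodeofSmoothFunc}, the estimate
\[
\bigl\|\proj_{T_X(x_k)}(\nabla F(x_k)^{*} y+w_k) - \proj_{T_X(x_k)}(\nabla F(x_k)^{*} y_k+w_k)\bigr\| \le \|\nabla F(x_k)^{*}(y-y_k)\| \to 0
\]
then lets one keep the same $w_k\in N_X(x_k)$ while replacing $y_k$ by $y$, showing that $t^{*}$ also lies on the right-hand side of \eqref{ProjCodeofSmoothFunc2}. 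The smooth-manifold case \eqref{ProjCodeofSmoothFunc-fixed} follows immediately from \Cref{Prop:SM-ProjCode}(c) applied with $S=F$: combining $D^*_X F(\bar x)(y) = \proj_{T_X(\bar x)} D^* F\vert_X(\bar x)(y)$, the identity $D^* F\vert_X(\bar x)(y)=\nabla F(\bar x)^{*} y + N_X(\bar x)$ obtained above, and the orthogonality of the subspaces $T_X(\bar x)$ and $N_X(\bar x)$ supplied by \Cref{Prop:SM-basic}(a) collapses the projection to $\proj_{T_X(\bar x)}(\nabla F(\bar x)^{*} y)$.

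The main obstacle is the first stage: the fibered description of $N_{\gph F\vert_X}(x,F(x))$ with $N_X(x)$ (not merely $\widehat N_X(x)$) must be established \emph{uniformly} for $x$ close to $\bar x$, which requires commuting the outer-limit operation with the affine change of variables $(a,b)\mapsto (a+\nabla F(x)^{*} b,\,b)$. This commutation rests essentially on smoothness of $F$ and on outer semicontinuity of the regular-to-limiting normal cone operation; any weakening of the smoothness assumption would break this identification. A subsidiary pitfall in the regular case is that convexity of $T_X(x)$ is indispensable for the one-Lipschitz absorption of the perturbation $y_k\to y$: without Clarke regularity one cannot discard the $y'\to y$ in the outer limit and is forced to settle for the looser form \eqref{ProjCodeofSmoothFunc}.
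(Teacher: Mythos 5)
Your proposal is correct and follows essentially the same route as the paper: the same formula $N_{\gph F\vert_X}(x,F(x))=\{(\nabla F(x)^*y+w,-y)\mid y\in\mathbb{R}^m,\ w\in N_X(x)\}$ (which the paper gets from $F\vert_X=F+\delta_X$ and the smooth-plus-indicator coderivative rule, whereas you rederive it via polars of tangent cones and an outer limit---both valid), the same nonexpansiveness-of-projection argument to absorb $y'\to y$ under Clarke regularity, and the same orthogonality/continuity argument from \Cref{Prop:SM-basic} and \Cref{Prop:SM-ProjCode} for the smooth-manifold case.
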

\begin{proof}
For smooth mapping $F$ defined on $\mathbb{R}^n$, we can always find an open set $O \supseteq X$ such that $F$ remains smooth on $O$. In this way, $\nabla F\vert_X (x) = \nabla F(x)$ for any $x \in X$.
Then for any $x\in X$ and \cite[Example 8.34]{VaAn},
$$N_{\gph F}(x, F(x) ) =  \left\{ ( \nabla F(x)^* y, -y ) \mid  y\in \mathbb{R}^m \right\}.$$
By expressing $F\vert_X = F + \delta_X$ where $\delta_X$ is the indicator function of $X$, and by \cite[Exercise 10.43]{VaAn}, we have
\begin{equation}\label{eqn:singlesmoothonX}
    D^*F\vert_X (x) (y) = N_X(x) + \nabla F(x)^*y , \ \forall y \in \mathbb{R}^m. 
\end{equation}
That is, for all $x\in X$,
$$ N_{\gph F\vert_X} (x, F(x))  = \left\{ ( \nabla F(x)^* y+ w, -y ) \mid y\in \mathbb{R}^m, w\in N_X(x) \right\} .$$
From definition of projectional coderivative (\ref{DefofProjCode}),
\begin{equation*}
\begin{aligned}
    &  \  t \in D^*_{X}F(\bar{x})(y) \\
    \Longleftrightarrow & \ (t, -y)\in \limsup_{\tiny x\xrightarrow[]{X}\bar{x}}  {\rm proj}_{T_X(x) \times \mathbb{R}^m }N_{\gph F\vert_X}(x, F(x)) \\
    \Longleftrightarrow & \ (t, -y)\in \limsup_{\tiny x\xrightarrow[]{X}\bar{x}}  {\rm proj}_{T_X(x) \times \mathbb{R}^m }\left\{ ( \nabla F(x)^* y' + w, -y' ) \mid  y'\in \mathbb{R}^m, \ w\in N_X(x) \right\}.
\end{aligned}
\end{equation*}

Therefore we have (\ref{ProjCodeofSmoothFunc}).

For $ t\in  \displaystyle \limsup_{\tiny x\xrightarrow[]{X}\bar{x}, y'\xrightarrow[]{} y} \left\{   {\rm proj}_{T_X(x)}\left( \nabla F(x)^*y' + w\right), \ w \in N_X(x) \right\}$,
there exist sequences $x_k \xrightarrow[]{X} \bar{x}$, $w_k \in N_X(x_k)$, $y_k \in \mathbb{R}^m$  and $t_k \in  {\rm proj}_{T_X(x_k)}  \left(  \nabla F(x_k)^* y_k + w_k \right)$, such that $t_k \rightarrow t$ and $y_k\rightarrow y$.
When $X$ is regular around $\bar{x}$, $T_X(x) $ is convex for all $x\in X$ around $\bar{x}$. With nonexpansive property of ${\rm proj}_{T_X(x_k)}$ for sufficiently large $k$, we have
\begin{equation*}
\begin{aligned}
     & \left\| {\rm proj}_{T_X(x_k)}  \left(  \nabla F(x_k)^* y + w_k \right) -t \right\|   \\
  \leq &  \left\| {\rm proj}_{T_X(x_k)}  \left(  \nabla F(x_k)^* y + w_k \right) -  {\rm proj}_{T_X(x_k)}  \left(  \nabla F(x_k)^* y_k + w_k \right) \right\|  \\ 
  & \hskip5cm +\left\| {\rm proj}_{T_X(x_k)}  \left(  \nabla F(x_k)^* y_k + w_k \right) -t \right\| \\
 \leq  &  \left\| \nabla F(x_k)^*\left( y - y_k\right)  \right\|  + \left\| {\rm proj}_{T_X(x_k)}  \left(  \nabla F(x_k)^* y_k + w_k \right) -t \right\|.
\end{aligned}
\end{equation*}
As $\left\| \nabla F(x_k)^*\left( y - y_k\right)  \right\| $ and $\left\| {\rm proj}_{T_X(x_k)}  \left(  \nabla F(x_k)^* y_k + w_k \right) -t \right\|$ both tend to $0$ when $k\rightarrow \infty$, we have $ {\rm proj}_{T_X(x_k)}  \left(  \nabla F(x_k)^* y + w_k \right) \rightarrow t$ as well. Therefore we have
\begin{align*}
    & \limsup_{\tiny x\xrightarrow[]{X}\bar{x}, y'\xrightarrow[]{} y} \left\{   {\rm proj}_{T_X(x)}\left( \nabla F(x)^*y' + w\right), \ w \in N_X(x) \right\}  \\ 
    \subseteq  & \  \limsup_{\tiny x\xrightarrow[]{X}\bar{x}} \left\{   {\rm proj}_{T_X(x)}\left( \nabla F(x)^*y + w\right), \ w \in N_X(x) \right\}. 
\end{align*}
Given that the inclusion in reverse is obvious by taking $y_k:=y$, we arrive at (\ref{ProjCodeofSmoothFunc2}).

If furthermore $X$ is a smooth manifold around $\bar{x}$, $T_X(x)$ and $N_X(x)$ are linear subspaces orthogonally complementary to each other for any $x\in X$ being sufficiently close to $\bar{x}$ (\cite[Example 6.8]{VaAn}).  By \cite[Theorem 12.22]{VaAn},
$$ {\rm proj}_{T_X(x)} \left( \nabla F(x)^* y   + w \right) = {\rm proj}_{T_X(x)} \left( \nabla F(x)^* y  \right) ,\ \forall w\in N_X(x).$$
Then  the fixed-point expression (\ref{ProjCodeofSmoothFunc-fixed}) is obtained along with the continuity of $T_X(x)$ and $N_X(x)$.
\end{proof}

\section{Lipschitz-like property relative to a smooth manifold}\label{sect:LipSM}
In the last section, we derived a fixed-point expression for projectional coderivative of a mapping relative to a smooth manifold (see Proposition \ref{Prop:SM-ProjCode}). Considering that the generalized Mordukhovich criterion (\Cref{thm-genMordcri}) requires that $X$ is a closed and convex set, in this section, we extend the criterion to the setting of a smooth manifold.

First, we give the sufficient and necessary conditions respectively for $S$ to be Lipschitz-like relative to a smooth manifold. Recall that the Lipschitz-like property relative to a set is given in Definition \ref{def-lip-like}.
 \begin{lemma}\label{lm:AprtSM-Nec}[Necessity, \cite[Theorem 2.1]{Meng2020}]
  Consider a mapping $S: \mathbb{R}^n\rightrightarrows  \mathbb{R}^m$, $\bar{x}\in X\subset \mathbb{R}^n$ where $X$ is a smooth manifold around $\bar{x}$, $\bar{u}\in S(\bar{x})$ and $\kappa\geq 0$. If $S$ has the Lipschitz-like property relative to $X$ at $\bar{x}$ for $\bar{u}$ with constant $\kappa$,   then  the condition
  \begin{equation*}
    \|{\rm proj}_{T_X(x)}(x^*)\|\leq \kappa \|u^*\|\quad\forall x^*\in \widehat{D}^*S\vert_X(x\mid u)(u^*)
  \end{equation*}
  holds  for all $(x,u)$ close enough to $(\bar{x}, \bar{u})$ in $\gph S\vert_X$.
 \end{lemma}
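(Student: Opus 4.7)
The plan is to adapt the necessity argument of \cite[Theorem 2.1]{Meng2020} to the smooth-manifold setting, exploiting the linear-subspace structure of $T_X(x)$ and $N_X(x)$ furnished by \Cref{Prop:SM-basic}(a). Fix $(x,u)\in \gph S\vert_X$ close enough to $(\bar x, \bar u)$ that $u$ lies in the neighborhood $W$ and $x$ lies in $V$ attached to the relative Lipschitz-like property, and that $X$ retains its smooth-manifold description on $V$. Pick any $x^*\in \widehat{D}^*S\vert_X(x\mid u)(u^*)$ and set $y^*:={\rm proj}_{T_X(x)}(x^*)$; the target inequality is $\|y^*\|\leq \kappa\|u^*\|$. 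The case $y^*=0$ is trivial, so assume $y^*\neq 0$ and take the unit vector $w:=y^*/\|y^*\|\in T_X(x)$ as a test direction.

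By the definition of the tangent cone there exist $t_k\searrow 0$ and $w_k\to w$ with $x_k:=x+t_k w_k\in X$. For $k$ large, $x_k\in X\cap V$, so the relative Lipschitz-like inclusion $S(x)\cap W\subseteq S(x_k)+\kappa\|x-x_k\|\mathbb{B}$ delivers some $u_k\in S(x_k)$ with $\|u_k-u\|\leq \kappa\|x_k-x\|$. In particular $(x_k,u_k)\xrightarrow{\gph S\vert_X}(x,u)$. Feeding $(x_k,u_k)$ into the regular Fr\'echet normal cone inequality $(x^*,-u^*)\in \widehat N_{\gph S\vert_X}(x,u)$ yields
\[
\langle x^*, x_k-x\rangle-\langle u^*, u_k-u\rangle\leq \varepsilon_k\sqrt{1+\kappa^2}\,\|x_k-x\|,
\]
with $\varepsilon_k\searrow 0$, using that $\|(x_k-x,u_k-u)\|\leq \sqrt{1+\kappa^2}\,\|x_k-x\|$. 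Divide by $\|x_k-x\|$ and pass to a subsequence along which $(u_k-u)/\|x_k-x\|$ converges to some $v$ with $\|v\|\leq \kappa$; letting $k\to\infty$ yields $\langle x^*, w\rangle\leq \langle u^*, v\rangle\leq \kappa\|u^*\|$.

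To close the argument, the orthogonality $x^*-y^*\in N_X(x)\perp T_X(x)$ from \Cref{Prop:SM-basic}(a) gives $\langle x^*,w\rangle=\langle y^*,w\rangle=\|y^*\|$, so $\|y^*\|\leq \kappa\|u^*\|$ as required. The main technical point is verifying that the Lipschitz-like inclusion really applies to the pair $(x,x_k)$ with both points in $X\cap V$ and with $u\in W$; this is secured by shrinking at the outset the neighborhood in $\gph S\vert_X$ from which $(x,u)$ is allowed to be taken. Beyond this bookkeeping, no further obstacle appears: it is precisely the subspace structure of $T_X(x)$ on a smooth manifold that lets the one-sided estimate $\langle x^*,w\rangle\leq \kappa\|u^*\|$ in a single direction $w=y^*/\|y^*\|$ deliver the full norm bound on $y^*$, without invoking $-w$ separately.
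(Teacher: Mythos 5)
Your argument is correct, and it is essentially the standard necessity argument that the paper itself does not reproduce but simply imports from \cite[Theorem 2.1]{Meng2020}: test the Fr\'echet normal-cone inequality along a tangent-cone sequence $x_k=x+t_kw_k\in X$ in the direction $w=y^*/\|y^*\|$, use the Lipschitz-like inclusion to manufacture $u_k\in S(x_k)$ with $\|u_k-u\|\leq\kappa\|x_k-x\|$, and pass to the limit. One small remark: the smooth-manifold (subspace) structure is not actually what makes the final step work. For projection onto \emph{any} closed cone $T$ one has the Moreau-type identity $\langle x^*-y^*,y^*\rangle=0$ for $y^*\in{\rm proj}_T(x^*)$ (minimize $\lambda\mapsto\|x^*-\lambda y^*\|^2$ over $\lambda>0$), so $\langle x^*,w\rangle=\|y^*\|$ already holds with $w=y^*/\|y^*\|\in T_X(x)$ without orthogonal complementarity; this is consistent with the paper treating necessity as a direct specialization of the general result for closed sets, in contrast to the sufficiency direction where genuine work is needed to replace $\cl\pos(X-x)$ by $T_X(x)$.
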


 The necessary condition is a direct application of \cite[Theorem 2.1]{Meng2020}. For the sufficient condition, some efforts need to be made to change the set from $\cl \pos (X - x)$ to $T_X(x)$. We give the proof similar to the one in \cite[Theorem 2.2]{Meng2020} employing the property of smooth manifold, Proposition \ref{Prop:SM-basic} (c). 
\begin{lemma}[Sufficiency]\label{lm:APrtSM-Suf}
Consider a mapping $S: \mathbb{R}^n\rightrightarrows  \mathbb{R}^m$, $\bar{x}\in X\subset \mathbb{R}^n$ where $X$ is a smooth manifold around $\bar{x}$,  $\bar{u}\in S(\bar{x})$ and $\tilde{\kappa}>\kappa>0$. Suppose that $\gph S$ is locally closed at $(\bar{x}, \bar{u})$.   If the  condition
\begin{equation}\label{ineq:projCode-Suf}
\|{\rm proj}_{T_X(x)}(x^*)\|\leq \kappa \|u^*\|,\quad \forall x^*\in D^*S\vert_X(x\mid u)(u^*)
\end{equation}
holds for all  $(x,u)$ close enough to $(\bar{x}, \bar{u})$ in $\gph S\vert_X$, then $S$ has the Lipschitz-like property relative to $X$ at $\bar{x}$ for $\bar{u}$ with  constant $\tilde{\kappa}$.
\end{lemma}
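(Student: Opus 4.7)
The plan is to proceed by contradiction, closely mirroring the strategy of \cite[Theorem 2.2]{Meng2020} but replacing the convexity-based reasoning (which relies on $\cl\pos(X-x)$) by the chord-versus-tangent estimate from \Cref{Prop:SM-basic} (c). The guiding idea is that in a smooth manifold any secant direction $(y-x)/\|y-x\|$ between nearby points of $X$ is asymptotically a tangent vector at $x$, and this is precisely what allows us to lift the coderivative bound \eqref{ineq:projCode-Suf} to a Lipschitz-like estimate along chords.

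Suppose the conclusion fails. Then one can extract sequences $x_k, x_k' \in X$ with $x_k, x_k' \to \bar{x}$ and $u_k' \in S(x_k')$ with $u_k' \to \bar{u}$ such that $d(u_k', S(x_k)) > \tilde{\kappa}\|x_k' - x_k\|$; in particular $x_k \neq x_k'$. Fix $\kappa' \in (\kappa, \tilde{\kappa})$. Apply the Ekeland variational principle to the lower semicontinuous function $(y, v) \mapsto \kappa'\|y - x_k'\| + \|v - u_k'\| + \delta_{\gph S\vert_X}(y, v)$, starting from an approximate minimizer of $\|v - u_k'\|$ over $S(x_k)$, to obtain a point $(y_k, w_k) \in \gph S\vert_X$ (with $y_k \neq x_k'$ for large $k$) at which a small perturbation of this function attains a local minimum. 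Translating the resulting Fr\'{e}chet subgradient inclusion through the definition of $\widehat{D}^*$ yields $u_k^*$ with $\|u_k^*\| = 1$ and $x_k^* \in \widehat{D}^*S\vert_X(y_k \mid w_k)(u_k^*)$ satisfying $\|x_k^*\|$ uniformly bounded (essentially by $\kappa'$) and $\langle x_k^*, e_k\rangle \geq \kappa' - o(1)$, where $e_k := (x_k' - y_k)/\|x_k' - y_k\|$. Local closedness of $\gph S\vert_X$ at $(\bar{x}, \bar{u})$, inherited from that of $\gph S$ and of $X$ as a smooth manifold, justifies the Ekeland step.

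Since $\widehat{D}^*S\vert_X \subseteq D^*S\vert_X$, hypothesis \eqref{ineq:projCode-Suf} applies at $(y_k, w_k)$ for all large $k$ and gives $\|{\rm proj}_{T_X(y_k)}(x_k^*)\| \leq \kappa$. On the other hand, by \Cref{Prop:SM-basic} (c), for any prescribed $\varepsilon > 0$ and all large $k$ one has $\|e_k - {\rm proj}_{T_X(y_k)}(e_k)\| \leq \varepsilon$. Setting $\tau_k := {\rm proj}_{T_X(y_k)}(e_k) \in T_X(y_k)$ with $\|\tau_k\| \leq 1$, we obtain $\langle x_k^*, \tau_k\rangle \geq \langle x_k^*, e_k\rangle - \varepsilon\|x_k^*\| \geq \kappa' - o(1) - \varepsilon\|x_k^*\|$, which by Cauchy--Schwarz forces $\|{\rm proj}_{T_X(y_k)}(x_k^*)\| \geq \kappa' - o(1) - \varepsilon\|x_k^*\|$. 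As $\|x_k^*\|$ is uniformly bounded, choosing $\varepsilon$ small enough makes the right-hand side strictly greater than $\kappa$ for all large $k$, contradicting the previous inequality.

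The main obstacle is the bookkeeping in the Ekeland step: the parameters must be calibrated so that (i) $(y_k, w_k)$ remains inside the neighborhood on which \eqref{ineq:projCode-Suf} holds, (ii) $\|x_k^*\|$ is controlled \emph{a priori} (so that the $\varepsilon$-error from \Cref{Prop:SM-basic} (c) can be absorbed into the slack $\tilde{\kappa} - \kappa > 0$), and (iii) the directional slope $\langle x_k^*, e_k\rangle$ is pushed as close to $\kappa'$ as required. Once these choices are synchronized, the use of \Cref{Prop:SM-basic} (c) to convert the chord direction into a tangent direction, together with the strict inequality $\tilde{\kappa} > \kappa$, completes the argument.
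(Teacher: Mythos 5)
Your proposal is correct and follows essentially the same route as the paper's proof: a contradiction argument built on the Ekeland/Fermat machinery of \cite[Theorem 2.2]{Meng2020}, with the decisive new step being the use of \Cref{Prop:SM-basic}~(c) to replace the secant direction by its projection onto $T_X(\tilde{x})$ and thereby force $\|{\rm proj}_{T_X(\tilde{x})}(x^*)\|>\kappa\|u^*\|$, contradicting \eqref{ineq:projCode-Suf}. The paper likewise leaves the Ekeland bookkeeping to the cited reference and only calibrates $\varepsilon'$ against the slack $\tilde{\kappa}-\kappa$, exactly as you describe.
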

\begin{proof}
The proof is similar to that in \cite[Theorem 2.2]{Meng2020} but differs in using $T_X(x)$ instead of $\cl \pos (X-x)$ by virtue of \Cref{Prop:SM-basic} (c).

Let $0<\varepsilon'<\frac{\tilde{\kappa}-\kappa}{\tilde{\kappa}+ \kappa}$. Then by Proposition \ref{Prop:SM-basic} (c), there is some $\delta>0$ such that the following holds for all $x',\tilde{x}\in X\cap \mathbb{B}_{\delta}(\bar{x})$ with $x'\not=\tilde{x}$:
\begin{equation}\label{ineq:projClose}
\displaystyle\left\|\;\frac{x'-\tilde{x}}{\|x'-\tilde{x}\|}-{\rm proj}_{T_X(\tilde{x})}\left( \frac{x'-\tilde{x}}{\|x'-\tilde{x}\|} \right) \;\right\|\leq \varepsilon'.
\end{equation}
Let $0<\varepsilon<\displaystyle\min\left\{\frac{\tilde{\kappa}-\kappa-(\kappa+\tilde{\kappa})\varepsilon'}{4\tilde{\kappa}(1+\varepsilon')}, \frac13\delta\right\}$.  The selection of $x', x''$ and obtaining $(\tilde{x}, \tilde{u}) \in \gph S\vert_X $ via the Ekeland's variational principle and $(x^*, u^*) \in N_{\gph S\vert_X} (\tilde{x}, \tilde{u})$ via Fermat's rule are the same as in the proof of \cite[Theorem 2.2]{Meng2020}.

It remains to construct a vector $w \in T_X(\tilde{x})$ and formulate a contradiction. Given $\tilde{x},x'\in X\cap \mathbb{B}_{3\varepsilon}(\bar{x})\subset  X\cap \mathbb{B}_{\delta}(\bar{x})$ with $\tilde{x}\not=x'$, let
$$ w^*:=\frac{x'-\tilde{x}}{\|x'-\tilde{x}\|}\quad \mbox{and}\quad w:={\rm proj}_{T_X(\tilde{x})}\frac{x'-\tilde{x}}{\|x'-\tilde{x}\|}\in T_X(\tilde{x}).$$
It follows from (\ref{ineq:projClose}) that
$
\|w^*-w\|\leq \varepsilon'.
$
Then by similar argument in \cite[Theorem 2.2]{Meng2020} and the choice of $\varepsilon$, we have
$$\langle x^*, w\rangle-\kappa \|u^*\|=  \displaystyle \langle x^*, w^*\rangle-\kappa \|u^*\|+\langle x^*, w-w^*\rangle  >0 $$
and
$$
\displaystyle{\rm proj}_{T_X(\tilde{x})}(x^*)=\max_{\tilde{w}\in T_X(\tilde{x})\cap \Sph}\langle x^*, \tilde{w}\rangle\geq \langle x^*, w \rangle>\kappa \|u^*\|,
$$ which thereby joins the contradiction argument in \cite[Theorem 2.2]{Meng2020}. 
\end{proof}
Next we present the characterization of the Lipschitz-like property relative to a smooth manifold in full.  In \cite[Proposition 18]{Daniilidis2011}, they showed that the condition (d) in the following theorem provided the sufficiency. We improve this result with necessity implemented. {\color{black} Recall that the notation $\lvert\cdot\rvert^+$ is the outer norm of a set-valued mapping (see \eqref{eq:def-outernorm}).}

\begin{theorem}[Lipschitz-like property relative to a smooth manifold]\label{thm:AprtSM-criterion}
 Consider a mapping $S: \mathbb{R}^n\rightrightarrows  \mathbb{R}^m$, $\bar{x}\in X\subset \mathbb{R}^n$ where $X$ is a smooth manifold around $\bar{x}$, and $\bar{u}\in S(\bar{x})$. Suppose that $\gph S$ is locally closed at $(\bar{x}, \bar{u})$.  The following properties are equivalent:
 \begin{description}
   \item[(a)] $S$ has the Lipschitz-like property relative to $X$ at $\bar{x}$ for $\bar{u}$.
             \item [(b)] ${\rm proj}_{T_X(\bar{x})}D^*S\vert_X(\bar{x}\mid\bar{u})(0)=\{0\}$.
          \item[(c)] $\lvert \, {\rm proj}_{T_X(\bar{x})}D^* S\vert_X(\bar{x}\mid\bar{u})\,\rvert^+<+\infty$.
      \item[(d)] $D^*S\vert_X(\bar{x}\mid\bar{u})(0)\cap T_X(\bar{x})=0$.
          \item[(e)] $D^*S\vert_X(\bar{x}\mid\bar{u})(0)= N_X(\bar{x})$.
          \item[(f)] $D^*_XS(\bar{x}\mid \bar{u})(0)=\{0\}$.
         \end{description}
Furthermore, we have
  \begin{equation}\label{eqn:AprtSM-LipMod}
   {\rm lip}_X S(\bar{x}\mid\bar{u})=\lvert\,{\rm proj}_{T_X(\bar{x})}D^* S\vert_X(\bar{x}\mid\bar{u})\,\rvert^+.
  \end{equation}
\end{theorem}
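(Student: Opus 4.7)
The plan is to run the equivalence as the cycle (a)$\Rightarrow$(b)$\Leftrightarrow$(c)$\Leftrightarrow$(d)$\Leftrightarrow$(e)$\Leftrightarrow$(f)$\Rightarrow$(a), where the middle block follows from \Cref{Prop:SM-ProjCode} and \Cref{coro:Osc-ProjCode}, and the two end-implications use Lemmas~\ref{lm:AprtSM-Nec} and \ref{lm:APrtSM-Suf}. The quantitative versions of these two implications will combine to give (\ref{eqn:AprtSM-LipMod}).

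The middle equivalences are essentially structural. By \Cref{Prop:SM-ProjCode}(c) evaluated at $(\bar{x},\bar{u})$ with $u^{*}=0$,
\[
D^{*}_{X}S(\bar{x}\mid\bar{u})(0)=\proj_{T_{X}(\bar{x})}D^{*}S\vert_{X}(\bar{x}\mid\bar{u})(0)=D^{*}S\vert_{X}(\bar{x}\mid\bar{u})(0)\cap T_{X}(\bar{x}),
\]
so (b)$\Leftrightarrow$(d)$\Leftrightarrow$(f) is immediate. For (d)$\Leftrightarrow$(e), note that $\gph S\vert_{X}\subseteq X\times\mathbb{R}^{m}$ forces $\widehat{N}_{X}(\bar{x})\times\{0\}\subseteq\widehat{N}_{\gph S\vert_{X}}(\bar{x},\bar{u})$, whence $N_{X}(\bar{x})\subseteq D^{*}S\vert_{X}(\bar{x}\mid\bar{u})(0)$. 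Using the orthogonal splitting from \Cref{Prop:SM-basic}(a), any $x^{*}\in D^{*}S\vert_{X}(\bar{x}\mid\bar{u})(0)$ writes as $x^{*}_{T}+x^{*}_{N}$; \Cref{Prop:SM-ProjCode}(b) applied to $(x^{*},0)\in N_{\gph S\vert_{X}}(\bar{x},\bar{u})$ places $(x^{*}_{T},0)$ back in the same normal cone, so $x^{*}_{T}\in D^{*}S\vert_{X}(\bar{x}\mid\bar{u})(0)\cap T_{X}(\bar{x})$, which (d) forces to be zero; the converse is trivial because $N_{X}(\bar{x})\cap T_{X}(\bar{x})=\{0\}$. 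For (b)$\Leftrightarrow$(c), \Cref{coro:Osc-ProjCode} makes the graph of $\proj_{T_{X}(\bar{x})}D^{*}S\vert_{X}(\bar{x}\mid\bar{u})$ closed, and for a positively homogeneous mapping with closed graph, finiteness of the outer norm is equivalent to triviality at zero (rescale a diverging sequence by its norm and use the cone property of the graph to pass to a limit).

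For (a)$\Rightarrow$(b), pick any $\kappa\geq\lip_{X}S(\bar{x}\mid\bar{u})$. Lemma~\ref{lm:AprtSM-Nec} gives $\|\proj_{T_{X}(x)}(x^{*})\|\leq\kappa\|u^{*}\|$ for every $x^{*}\in\widehat{D}^{*}S\vert_{X}(x\mid u)(u^{*})$ whenever $(x,u)$ is near $(\bar{x},\bar{u})$ in $\gph S\vert_{X}$. Taking $(x_{k},u_{k})\xrightarrow[]{\gph S\vert_{X}}(\bar{x},\bar{u})$, $(x^{*}_{k},u^{*}_{k})\to(x^{*},u^{*})$ with $x^{*}_{k}\in\widehat{D}^{*}S\vert_{X}(x_{k}\mid u_{k})(u^{*}_{k})$, the continuity $\proj_{T_{X}(x_{k})}(x^{*}_{k})\to\proj_{T_{X}(\bar{x})}(x^{*})$ from \Cref{Prop:SM-basic}(b) lifts the bound to every $x^{*}\in D^{*}S\vert_{X}(\bar{x}\mid\bar{u})(u^{*})$. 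Setting $u^{*}=0$ yields (b); keeping $u^{*}$ general and taking the infimum over admissible $\kappa$ produces $|\proj_{T_{X}(\bar{x})}D^{*}S\vert_{X}(\bar{x}\mid\bar{u})|^{+}\leq\lip_{X}S(\bar{x}\mid\bar{u})$.

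The principal obstacle lies in (c)$\Rightarrow$(a), because Lemma~\ref{lm:APrtSM-Suf} requires the bound (\ref{ineq:projCode-Suf}) throughout a whole neighborhood of $(\bar{x},\bar{u})$, not only at that point. I plan to upgrade the pointwise outer-norm bound to a uniform one by contradiction. Writing $\kappa:=|D^{*}_{X}S(\bar{x}\mid\bar{u})|^{+}$, if for some $\tilde{\kappa}>\kappa$ no neighborhood works, choose $(x_{k},u_{k})\xrightarrow[]{\gph S\vert_{X}}(\bar{x},\bar{u})$, $u^{*}_{k}\in\mathbb{B}$ and $x^{*}_{k}\in D^{*}_{X}S(x_{k}\mid u_{k})(u^{*}_{k})$ with $\|x^{*}_{k}\|>\tilde{\kappa}$. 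Rescaling by $\lambda_{k}:=\|x^{*}_{k}\|$ and using positive homogeneity of $D^{*}_{X}S(x_{k}\mid u_{k})$, the normalized sequence $(u^{*}_{k}/\lambda_{k},x^{*}_{k}/\lambda_{k})$ is bounded with $\|x^{*}_{k}/\lambda_{k}\|=1$ and $\|u^{*}_{k}/\lambda_{k}\|\leq 1/\tilde{\kappa}$; its cluster point $(u^{*},x^{*})$ lies in the graph of $D^{*}_{X}S(\bar{x}\mid\bar{u})$ by \Cref{coro:Osc-ProjCode}, with $\|x^{*}\|=1$. Either $u^{*}=0$, contradicting (b)$\Leftrightarrow$(f), or $u^{*}\neq 0$, contradicting $|D^{*}_{X}S(\bar{x}\mid\bar{u})|^{+}=\kappa<\tilde{\kappa}$. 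Feeding the resulting neighborhood bound into Lemma~\ref{lm:APrtSM-Suf} gives (a) with modulus at most any $\tilde{\kappa}'>\tilde{\kappa}$; letting first $\tilde{\kappa}'\downarrow\tilde{\kappa}$ and then $\tilde{\kappa}\downarrow\kappa$ yields $\lip_{X}S(\bar{x}\mid\bar{u})\leq|\proj_{T_{X}(\bar{x})}D^{*}S\vert_{X}(\bar{x}\mid\bar{u})|^{+}$, which combined with the reverse inequality of the previous paragraph establishes (\ref{eqn:AprtSM-LipMod}).
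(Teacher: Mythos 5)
Your proof is correct and follows essentially the same route as the paper: the middle equivalences rest on Proposition~\ref{Prop:SM-ProjCode}, the inclusion $N_X(\bar{x})\subseteq D^*S\vert_X(\bar{x}\mid\bar{u})(0)$ and Corollary~\ref{coro:Osc-ProjCode}, while the two end implications invoke Lemmas~\ref{lm:AprtSM-Nec} and~\ref{lm:APrtSM-Suf} together with the continuity of ${\rm proj}_{T_X(\cdot)}$ and the normalization-plus-outer-semicontinuity contradiction. The only (cosmetic) difference is that you upgrade the pointwise outer-norm bound to a uniform neighborhood bound and then apply Lemma~\ref{lm:APrtSM-Suf} in the forward direction, whereas the paper runs the same rescaling argument through the contrapositive of that lemma; the quantitative content and the resulting modulus formula \eqref{eqn:AprtSM-LipMod} are identical.
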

\begin{proof} 
It is clear to see that
 \begin{equation}\label{incl-AprtSM-triv}
  D^*S\vert_X(\bar{x}\mid\bar{u})(0)\supset \widehat{D}^*S\vert_X(\bar{x}\mid\bar{u})(0)\supset \widehat{D}^*S(\bar{x}\mid\bar{u})(0)+ N_X(\bar{x})\supset N_X(\bar{x}),
 \end{equation}
 and that the mapping ${\rm proj}_{T_X(\bar{x})}D^* S\vert_X(\bar{x}\mid\bar{u})$ is outer semicontinuous (see \Cref{coro:Osc-ProjCode}) and positively homogeneous.  Then the equivalence of (b) and (c)  follows immediately from \cite[Proposition 9.23]{VaAn}. The equivalences among (b), (d) and (f) follows readily from Proposition \ref{Prop:SM-basic} (c).
 In view of (\ref{incl-AprtSM-triv}), we get the equivalence of (b) and (e).
It remains to prove the equivalence of (a) and (b). 

[(a) $\Longrightarrow$ (c)] Assuming (a), we will show (c) by proving the inequality
 \begin{equation}\label{ineq:AprtSM-LipModUp}
   \lvert\,{\rm proj}_{T_X(\bar{x})}D^* S\vert_X(\bar{x}\mid\bar{u})\,\rvert^+\leq {\rm lip}_X S(\bar{x}\mid\bar{u}).
 \end{equation}
Choose any $\kappa\in ({\rm lip}_X S(\bar{x}\mid\bar{u}), +\infty)$. Then $S$ has the Lipschitz-like property relative to $X$ at $\bar{x}$ for $\bar{u}$ with constant $\kappa$. Let $(u^*, v^*)$ be  given arbitrarily such that $v^*\in {\rm proj}_{T_X(\bar{x})}D^* S\vert_X(\bar{x}\mid\bar{u})(u^*)$. Then there is some $x^*\in D^* S\vert_X(\bar{x}\mid\bar{u})(u^*)$ such that
 \begin{equation}\label{eqn-AprtSM-Cal1}
  v^*={\rm proj}_{T_X(\bar{x})}(x^*).
 \end{equation}
By the definition of the limiting coderivatives, there are some $(x_k, u_k)\rightarrow (\bar{x}, \bar{u})$ with $(x_k, u_k)\in \gph S\vert_X$ and $x^*_k\in \widehat{D}^*S\vert_X(x_k\mid u_k)(u_k^*)$ such that $(x^*_k, -u^*_k)\rightarrow (x^*, -u^*)$. By Lemma \ref{lm:AprtSM-Nec}, there exists some positive integer $k'$ such that
 \begin{equation}\label{eqn-AprtSM-Cal2}
  \|{\rm proj}_{T_X(x_k)}(x_k^*)\|\leq \kappa \|u^*_k\|\quad\forall k\geq k'.
 \end{equation}
 Since $X$ is a smooth manifold around $\bar{x}$, we have
 \begin{equation}\label{eqn-AprtSM-Cal3}
  {\rm proj}_{T_X(x_k)}(x_k^*)\to {\rm proj}_{T_X(\bar{x})}(x^*).
 \end{equation}
In view of (\ref{eqn-AprtSM-Cal1}-\ref{eqn-AprtSM-Cal3}), we have $\|v^*\|\leq \kappa \|u^*\|$ and hence
 \[
  \lvert\,{\rm proj}_{T_X(\bar{x})}D^* S\vert_X(\bar{x}\mid\bar{u})\,\rvert^+\leq \kappa.
 \]
As $\kappa\in ({\rm lip}_X S(\bar{x}\mid\bar{u}), +\infty)$ is chosen arbitrarily, we get (\ref{ineq:AprtSM-LipModUp}) immediately.

[(c) $\Longrightarrow$ (a)] Assuming (c), we will show (a) by proving the inequality 
  \[
    {\rm lip}_X S(\bar{x}\mid\bar{u})\leq \lvert\,{\rm proj}_{T_X(\bar{x})}D^* S\vert_X(\bar{x}\mid\bar{u})\,\rvert^+,
  \]
 from which the equality (\ref{eqn:AprtSM-LipMod})  follows as the inequality in the other direction has been proved earlier.

  Suppose by contradiction that $\lvert\,{\rm proj}_{T_X(\bar{x})}D^* S\vert_X(\bar{x}\mid\bar{u})\,\rvert^+<{\rm lip}_X S(\bar{x}\mid\bar{u})$. Choose any $\kappa', \kappa''$ as
  $$    \kappa'\in   \left(\lvert\,{\rm proj}_{T_X(\bar{x})}D^* S\vert_X(\bar{x}\mid\bar{u})\,\rvert^+,\, {\rm lip}_X S(\bar{x}\mid\bar{u})\right) ,    \kappa''\in  \left(\lvert\,{\rm proj}_{T_X(\bar{x})}D^* S\vert_X(\bar{x}\mid\bar{u})\,\rvert^+,\, \kappa'\right).$$
   Clearly, $S$ fails to have  the Lipschitz-like property relative to $X$ at $\bar{x}$ for $\bar{u}$ with constant $\kappa'$. By Lemma \ref{lm:APrtSM-Suf},
     there exist  some sequences
$(x_k,u_k)\to(\bar{x},\bar{u})$ with $(x_k,u_k)\in \gph S\vert_X$ and some $x^*_k\in D^*S\vert_X(x_k\mid u_k)(u^*_k)$ such that
 $\|v^*_k\|> \kappa'' \|u^*_k\|,\ \forall k, $
where $v^*_k:={\rm proj}_{T_{X}(x_k)}(x^*_k)$.
By Proposition \ref{Prop:SM-ProjCode}, we have $$ v^*_k\in D^*S\vert_X(x_k\mid u_k)(u^*_k)\cap T_X(x_k),\ \forall k.$$
 Clearly, we have $v^*_k\not=0$ for all $k$.
 By taking a subsequence if necessary, we assume that there is some $v^*\in T_X(\bar{x})$ with $\|v^*\|=1$ such that $ \frac{v_k^*}{\|v^*_k\|}\to v^*$.
  As we have $ \frac{\|u^*_k\|}{\|v^*_k\|}< \frac{1}{\kappa''}$ for any $k$, 
by taking a subsequence if necessary again, we assume that there is some $u^*$ with $\kappa''\|u^*\|\leq 1$ such that $\frac{u^*_k}{\|v^*_k\|}\to u^*$. 
 Thus, we have $v^*\in D^*S\vert_X(\bar{x}\mid \bar{u})(u^*)$ and $\|v^*\|\geq \kappa'' \|u^*\|$.
 So we have
\[
\begin{array}{lll}
\lvert\, {\rm proj}_{T_X(\bar{x})}D^* S\vert_X(\bar{x}\mid\bar{u})\,\rvert^+&:=&\displaystyle \sup_{\tilde{u}^*\in \mathbb{B}} ~~ \sup_{\tilde{x}^*\in D^*S\vert_X(\bar{x}\mid\bar{u})(\tilde{u}^*)}\|{\rm proj}_{T_X(\bar{x})}(\tilde{x}^*)\|\\[0.5cm]
&\geq & \|{\rm proj}_{T_X(\bar{x})}(\kappa'' v^*)\| = \kappa''\|v^*\| =\kappa'' 
\end{array}
\]
contradicting to the setting that $\kappa''\in \left(\lvert\,{\rm proj}_{T_X(\bar{x})}D^* S\vert_X(\bar{x}\mid\bar{u})\,\rvert^+,\, \kappa'\right)$.  This completes the proof. \end{proof}

\section{Chain rules for projectional coderivatives}\label{sect:chainrules}

To broaden the scope of application of the projectional coderivative to various systems, one important thing would be developing the corresponding calculus rules, which is also the main goal of the coming two sections. In this section, we focus on the chain rules, i.e., the calculation of $D^*_X S$ with $S = S_2 \circ S_1$. Unlike the chain rule for coderivatives (see \cite[Theorem 10.37]{VaAn}), the one for projectional coderivatives comes with stricter assumptions as it involves projection. 
\begin{theorem}[Projectional coderivative chain rule]\label{Thm-ProjCode-ChainRule}
  Suppose $S = S_2 \circ S_1$ for mappings $S_1: \mathbb{R}^n \rightrightarrows \mathbb{R}^p$ and $S_2: \mathbb{R}^p \rightrightarrows \mathbb{R}^m$. Let $X \subseteq \mathbb{R}^n$ be a closed set with $\bar{x} \in X$. Here $S_1$ is outer semicontinuous relative to $X$ and $S_2$ is outer semicontinuous. For a pair $(\bar{x}, \bar{u}) \in \gph S\vert_X = \gph (S_2 \circ S_1\vert_X)$,  assume:
  \begin{description}
    \item[(a)] the mapping $(x,u) \mapsto S_1 \vert_X (x)  \cap S^{-1}_2 (u)$ is locally bounded at $(\bar{x},\bar{u})$, or equivalently, the mapping $(x, u) \mapsto S_1(x)\cap S^{-1}_2(u)$ is locally bounded relative to $X\times \mathbb{R}^m$ at $(\bar{x},\bar{u})$ (this being true in particular if either $S_1$ is locally bounded relative to $X$ at $\bar{x}$ or $S^{-1}_2$ is locally bounded at $\bar{u}$). In this way, $S_2 \circ S_1\vert_X$ is outer semicontinuous (see \cite[Proposition 5.52 (b)]{VaAn}).
    \item[(b)] $D^*S_2 (\bar{w} \mid \bar{u})(0) \cap D^*_X S_1 (\bar{x}\mid\bar{w})^{-1} (0) = \{0\}$ holds for any $\bar{w} \in S_1\vert_X(\bar{x})\cap S^{-1}_2(\bar{u})$ (this being true in particular if $S_2$ has Lipschitz-like property at $\bar{w}$ for $\bar{u}$).
  \end{description}
  Then $\gph S\vert_X$ is locally closed around $(\bar{x},\bar{u})$ and
  \begin{equation}\label{projcodeinclforchainrule}
    D^*_X S(\bar{x} \mid\bar{u}) \subset \bigcup_{\bar{w} \in S_1\vert_X(\bar{x})\cap S^{-1}_2 (\bar{u})} D^*_X S_1 (\bar{x}\mid\bar{w}) \circ D^*S_2 (\bar{w}\mid\bar{u}).
  \end{equation}
  Besides, if (a) and (b) hold, $S_1\vert_X$ and $S_2$ are graph-convex and $X$ is a smooth manifold around $\bar{x}$, then $S\vert_X$ is graph-convex as well and we can obtain an equation:
  \begin{equation}\label{projcodeequaforchainrule}
    D^*_X S(\bar{x} \mid\bar{u}) =  D^*_X S_1 (\bar{x}\mid\bar{w}) \circ D^*S_2 (\bar{w}\mid\bar{u}), \forall  \bar{w} \in S_1\vert_X(\bar{x})\cap S^{-1}_2(\bar{u}).
  \end{equation}
\end{theorem}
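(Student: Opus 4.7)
The plan is to establish local closedness and the inclusion \eqref{projcodeinclforchainrule} by reducing, via \Cref{lm:ProjCodeInverseAt0-Incl}, to a setting where the classical coderivative chain rule \cite[Theorem~10.37]{VaAn} applies pointwise along an approximating sequence, and then to upgrade to the equality \eqref{projcodeequaforchainrule} by exploiting graph-convexity together with the fixed-point expression in \Cref{Prop:SM-ProjCode}(c). Local closedness of $\gph S\vert_X$ near $(\bar x,\bar u)$ is immediate: $S_1\vert_X$ is outer semicontinuous by hypothesis, $S_2$ is outer semicontinuous, and (a) provides the local boundedness needed by \cite[Proposition~5.52(b)]{VaAn} to conclude that $S\vert_X=S_2\circ S_1\vert_X$ is outer semicontinuous at $\bar x$.

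For \eqref{projcodeinclforchainrule}, I would take $t^*\in D^*_X S(\bar x\mid\bar u)(u^*)$ and, via \Cref{def:projcode}, extract $(x_k,u_k)\xrightarrow[]{\gph S\vert_X}(\bar x,\bar u)$, $u_k^*\to u^*$, and $x_k^*\in D^*S\vert_X(x_k\mid u_k)(u_k^*)$ with $\proj_{T_X(x_k)}(x_k^*)\to t^*$. A preliminary step is to verify that the classical qualification $D^*S_2(w\mid u)(0)\cap D^*S_1\vert_X(x\mid w)^{-1}(0)=\{0\}$ survives in a neighborhood of $(\bar x,\bar u)$ for admissible intermediate $w$: otherwise a normalized violating sequence has, after extraction using (a) and the outer semicontinuity of the classical coderivative, a unit-norm limit $z^*$ in $D^*S_2(\bar w\mid\bar u)(0)\cap D^*S_1\vert_X(\bar x\mid\bar w)^{-1}(0)$, and by \Cref{lm:ProjCodeInverseAt0-Incl} this sits in the set ruled out by (b). With the qualification available along the sequence, \cite[Theorem~10.37]{VaAn} applied at $(x_k,u_k)$ delivers $w_k\in S_1\vert_X(x_k)\cap S_2^{-1}(u_k)$, $z_k^*\in D^*S_2(w_k\mid u_k)(u_k^*)$, and $x_k^*\in D^*S_1\vert_X(x_k\mid w_k)(z_k^*)$; assumption (a) then yields $w_k\to\bar w\in S_1\vert_X(\bar x)\cap S_2^{-1}(\bar u)$ along a subsequence.

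The main obstacle is that $\{x_k^*\}$ is not controlled, only its projection onto $T_X(x_k)$, so boundedness of $\{z_k^*\}$ is delicate. If $\|z_k^*\|\to\infty$, I would normalize $\tilde z_k^*:=z_k^*/\|z_k^*\|$, pass to a subsequential limit $\tilde z^*$ of unit norm, and obtain $\tilde z^*\in D^*S_2(\bar w\mid\bar u)(0)$ from positive homogeneity and outer semicontinuity. Simultaneously $x_k^*/\|z_k^*\|\in D^*S_1\vert_X(x_k\mid w_k)(\tilde z_k^*)$ and $\proj_{T_X(x_k)}(x_k^*/\|z_k^*\|)\to 0$, so \Cref{def:projcode} produces $0\in D^*_X S_1(\bar x\mid\bar w)(\tilde z^*)$; assumption (b) forces $\tilde z^*=0$, the desired contradiction. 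Once bounded, $z_k^*\to z^*\in D^*S_2(\bar w\mid\bar u)(u^*)$ by outer semicontinuity, and reading the quadruple $(x_k,w_k,x_k^*,z_k^*)$ through \Cref{def:projcode} gives $t^*\in D^*_X S_1(\bar x\mid\bar w)(z^*)$, proving \eqref{projcodeinclforchainrule}.

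For \eqref{projcodeequaforchainrule}, graph-convexity of $S\vert_X$ is obtained by writing $\gph S\vert_X$ as the image under the linear projection $(x,w,u)\mapsto(x,u)$ of the convex set $(\gph S_1\vert_X\times\mathbb{R}^m)\cap(\mathbb{R}^n\times\gph S_2)$. The $\subseteq$ direction has already been proved, so only $\supseteq$ remains. Fix $\bar w\in S_1\vert_X(\bar x)\cap S_2^{-1}(\bar u)$, $z^*\in D^*S_2(\bar w\mid\bar u)(u^*)$ and $t^*\in D^*_X S_1(\bar x\mid\bar w)(z^*)$. Since $X$ is a smooth manifold around $\bar x$, \Cref{Prop:SM-ProjCode}(c) gives $t^*\in T_X(\bar x)$ together with $t^*\in D^*S_1\vert_X(\bar x\mid\bar w)(z^*)$. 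Convexity of $\gph S_1\vert_X$ and $\gph S_2$ turns the two coderivative memberships into the support inequalities $\langle t^*,x-\bar x\rangle-\langle z^*,w-\bar w\rangle\le 0$ on $\gph S_1\vert_X$ and $\langle z^*,w-\bar w\rangle-\langle u^*,u-\bar u\rangle\le 0$ on $\gph S_2$; adding them along any $(x,u)\in\gph S\vert_X$ with intermediate $w$ yields $(t^*,-u^*)\in N_{\gph S\vert_X}(\bar x,\bar u)$, hence $t^*\in D^*S\vert_X(\bar x\mid\bar u)(u^*)$. Intersecting with $T_X(\bar x)$ and invoking \Cref{Prop:SM-ProjCode}(c) once more delivers $t^*\in D^*_X S(\bar x\mid\bar u)(u^*)$, completing the proof.
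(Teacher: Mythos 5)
Your proof is correct and follows essentially the same route as the paper's: you verify that assumption (b) implies the classical qualification via \Cref{lm:ProjCodeInverseAt0-Incl}, propagate it to nearby points by a normalization--compactness argument, apply the coderivative chain rule along the approximating sequence, and use (b) again to rule out unbounded intermediate adjoint sequences, with the equality case resting on \Cref{Prop:SM-ProjCode}(c) exactly as in the paper. The only cosmetic differences are that you invoke the packaged chain rule \cite[Theorem 10.37]{VaAn} where the paper re-derives the normal-cone estimate from \cite[Theorems 6.14 and 6.43]{VaAn}, and that you prove the convex lower inclusion by adding support inequalities rather than citing the equality case of that chain rule.
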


\begin{proof}
  By Lemma \ref{lm:ProjCodeInverseAt0-Incl}, we have that the constraint qualification (b) also indicates the constraint qualification below:
  \begin{equation}\label{CQforChainRule-Code}
    D^*S_2 (\bar{w} \mid\bar{u})(0) \cap D^* S_1\vert_X (\bar{x}\mid\bar{w})^{-1} (0) = \{0\}.
  \end{equation}
  Let $C = \{(x,w,u) \mid (x,w) \in \gph S_1\vert_X, \ (w,u) \in \gph S_2\}$ and  $G: (x,w,u) \mapsto (x,u)$. Then $\gph S\vert_X  = G(C)$. With assumption (a), we can obtain $\varepsilon > 0$ such that $G^{-1}( \mathcal{N}_\varepsilon (\bar{x},\bar{u}) )\cap C$ is bounded.
  Then by \cite[Theorem 6.43]{VaAn} on $\gph S\vert_X $ at $(\bar{x},\bar{u})$, we have that $\gph S\vert_X$ is locally closed at $(\bar{x}, \bar{u})$ and
  \begin{align}
    N_{\gph S\vert_X}(\bar{x}, \bar{u}) \subset &  \bigcup_{(\bar{x},\bar{w},\bar{u})\in G^{-1}(\bar{x},\bar{u})\cap C} \left\{ ( v, -y) \ \mid \ \nabla G(\bar{x},\bar{w},\bar{u})^* (v,-y) \in N_C(\bar{x},\bar{w},\bar{u})\right\}  \nonumber  \\
    =  &   \bigcup_{\bar{w} \in S_1\vert_X (\bar{x}) \cap S^{-1}_2 (\bar{u})} \left\{ ( v, -y) \ \mid \ (v, 0, -y) \in N_C(\bar{x},\bar{w},\bar{u})\right\} \label{eqn:pf-CR-1}.
  \end{align}
 \par
  Next we try to obtain the expression for $N_C(\bar{x},\bar{w},\bar{u})$. Let $D = \gph S_1\vert_X \times \gph S_2$. For $F:(x,w,u) \mapsto (x,w,w,u)$, we have $C = F^{-1}(D)$. Here the definition of $F$ ensures the component $w$ of $(x,w,w,u)$ in $D$ belongs to $S_1 \vert_X (x) \cap S^{-1}_2 (u)$.  Here we apply \cite[Theorem 6.14]{VaAn} on $C = F^{-1}(D)$. The constraint qualification requires that:
  $$\forall  q \in N_D(F(\bar{x},\bar{w},\bar{u})) \text{ with } -\nabla F(\bar{x},\bar{w},\bar{u})^*  q = 0 \Longrightarrow q =0,$$
  which is
  \begin{equation*}
      \begin{cases}
         ( q_1,  q_2) \in N_{\gph S_1\vert_X}(\bar{x}, \bar{w}) \\
         ( q_3,  q_4) \in N_{\gph S_2}(\bar{w},\bar{u}) \\ 
         (q_1, q_2 + q_3, q_4) =0
      \end{cases} \Longrightarrow q_1, q_2, q_3, q_4 = 0 
  \end{equation*}
due to the product form of $D = \gph S_1\vert_X \times \gph S_2$ (see \cite[Proposition 6.41]{VaAn}).
  By expressing in coderivatives, it becomes
  \begin{equation*}
    0 \in D^*S_1\vert_X(\bar{x}\mid\bar{w})(q_3) ,\  q_3 \in  D^*S_2(\bar{w}\mid\bar{u})(0) \Longrightarrow q_3 =0  \text{ for all } \bar{w}\in S_1\vert_X (\bar{x}) \cap S^{-1}_2 (\bar{u}),
  \end{equation*}
  which can be reformulated as in (\ref{CQforChainRule-Code}).
  Then we can have the inclusion:
  \begin{align*}
     N_C(\bar{x},\bar{w},\bar{u}) \subset & \left\{ \nabla F(\bar{x},\bar{w},\bar{u})^* q \  \mid  \ q \in N_D(\bar{x},\bar{w},\bar{w}, \bar{u} )  \right\} \\ 
    =  &  \left\{(q_1, q_2+ q_3,q_4) \  \mid  \  (q_1, q_2) \in N_{\gph S_1\vert_X}(\bar{x}, \bar{w}), (q_3, q_4) \in N_{\gph S_2}(\bar{w},\bar{u}) \right\}.
  \end{align*}
   Together with \eqref{eqn:pf-CR-1} we have
   \begin{multline*}
         N_{\gph S\vert_X}(\bar{x}, \bar{u}) \subseteq  \bigcup_{\bar{w} \in S_1\vert_X (\bar{x}) \cap S^{-1}_2 (\bar{u})}  \bigg\{ ( x^*, -u^*)  \ \big\vert \ \exists w^* \text{ s.t. }  \\
        x^* \in D^*S_1\vert_X (\bar{x}\mid\bar{w})(w^*) , w^* \in D^*S_2 (\bar{w}\mid\bar{u}) (u^*)\bigg\}.
   \end{multline*}
   Next we prove that the constraint qualification (\ref{CQforChainRule-Code}) also holds for all $(x,u)$ in $\gph S\vert_X $ sufficiently near to $(\bar{x}, \bar{u})$ by contradiction.
   Suppose there exist sequences $(x_k, u_k)\xrightarrow{\gph S\vert_X} (\bar{x}, \bar{u})$, $w_k \in S_1\vert_X (x_k ) \cap S_2^{-1} (u_k) $ , 
   and $w^*_k  \in D^*S_2 (w_k \mid u_k)(0) \cap D^*S_1\vert_X (x_k\mid w_k)^{-1} (0)$ (which is a cone) such that $w^*_k \neq 0$. 
   Without loss of generality we assume $\|w^*_k\|=1$. Note that under assumption (a), $w_k \rightarrow \bar{w} \in S_1\vert_X (\bar{x} ) \cap S_2^{-1} (\bar{u})$. 
   By outer semicontinuity of normal cone mappings, $w^*_k $ must converge to some $ w^* \in D^*S_2 (\bar{w} \mid\bar{u})(0) \cap D^*S_1\vert_X (\bar{x}\mid\bar{w})^{-1} (0) $ with $ \|w^*\| =1$, which contradicts (\ref{CQforChainRule-Code}).
  As the assumption (a) and (\ref{CQforChainRule-Code}) hold for all $(x,u)$ in $\gph S\vert_X $ around $(\bar{x}, \bar{u})$, the inclusion can be obtained:
  \begin{multline}\label{incl3inpf}
       N_{\gph S\vert_X}(x,u) \subset \bigcup_{w \in S_1\vert_X (x) \cap S^{-1}_2 (u)} \bigg\{ ( x^*, -u^*) \   \big\vert \ \exists w^* \text{ s.t. } \\
       x^* \in D^*S_1\vert_X (x\mid w)(w^*) , w^* \in D^*S_2 (w\mid u) (u^*)\bigg\} 
  \end{multline}
  for all $(x,u)$ in $\gph S\vert_X$ around $(\bar{x}, \bar{u})$.
  Given the upper estimate of $N_{\gph S\vert_X}(x,u)$ in (\ref{incl3inpf}), we now proceed to exploring the estimate of projectional coderivative $D^*_X S(\bar{x},\bar{u})$. Let $t^* \in  D^*_X S(\bar{x}\mid \bar{u})(u^*) $, then there are sequences $(x_k, u_k)\xrightarrow{\gph S\vert_X} (\bar{x}, \bar{u})$ and $(x^*_k, -u^*_k) \in N_{\gph S\vert_X}(x_k, u_k) $  with $t^*_k \in {\rm proj}_{T_X(x_k)} (x^*_k )$, such that $t^*_k\rightarrow t^*$ and $u^*_k \rightarrow u^*$. By (\ref{incl3inpf}), there exist $ w_k \in S_1\vert_X(x_k) \cap S^{-1}_2 (u_k)$ and $w^*_k$ such that $(x^*_k, -w^*_k) \in N_{\gph S_1\vert_X} (x_k,w_k)$ and $(w^*_k,-u^*_k) \in N_{\gph S_2}(w_k,u_k)$.

Given $w_k \in S_1\vert_X(x_k) \cap S^{-1}_2 (u_k)$, the outer semicontinuity of $S_1\vert_X$ and $S^{-1}_2$ and  local boundedness of the mapping $(x,u) \mapsto S_1\vert_X (x)  \cap S^{-1}_2 (u)$ around $(\bar{x}, \bar{u})$, $\{w_k\}$ must converge to some $\bar{w} \in S_1\vert_X(\bar{x}) \cap S^{-1}_2 (\bar{u})$ (taking a subsequence if necessary).
For $(w^*_k,-u^*_k) \in N_{\gph S_2}(w_k,u_k)$ and outer semicontinuity of $(w,u) \mapsto N_{\gph S_2 }(w,u)$ at $(\bar{w},\bar{u})$, we have either $w^*_k \rightarrow w^*$  or $\lambda_k w^*_k \rightarrow w^*$ with $\lambda_k \searrow 0$.
For the first case we have $w^* \in D^*S_2 (\bar{w} \mid \bar{u})(u^*)$. Given $(x^*_k, -w^*_k) \in N_{\gph S_1\vert_X} (x_k,w_k)$ and $t^*_k \in {\rm proj}_{T_X(x_k)} (x^*_k )$ with $t^*_k \rightarrow t^*$, then $t^* \in D^*_X S_1(\bar{x}\mid\bar{w})(w^*)$. Thus $t^* \in D^*_X S_1 (\bar{x}\mid\bar{w}) \circ D^*S_2 (\bar{w} \mid \bar{u})(u^*)$ with $\bar{w} \in S_1\vert_X (\bar{x})\cap S^{-1}_2(\bar{u})$.

For the second case, without loss of generality we can assume $\|w^*\| =1$. Under the conic nature, $\lambda_k w^*_k \in D^*S_2(w_k\mid u_k)(\lambda_k u^*_k)$. Given $\{u^*_k\}$ is bounded with $u^*_k\rightarrow u^*$, then $\lambda_k u^*_k \rightarrow 0$ and we have $w^* \in D^*S_2 (\bar{w} \mid \bar{u})(0) $. Similarly we have $(\lambda_k x^*_k, -\lambda_k w^*_k) \in N_{\gph S_1\vert_X} (x_k,w_k)$.
As $T_X(x)$ is a nonempty closed cone for any $x\in X$ around $\bar{x}$, $\lambda_k t^*_k \in {\rm proj}_{T_{X}(x_k)}(\lambda_k x^*_k) $ and $\lambda_k t^*_k \rightarrow 0$. That is, $0 \in D^*_X S_1 (\bar{x} \mid \bar{w})(w^*)$. Thus we have $w^* \in D^*S_2 (\bar{w} \mid\bar{u})(0) \cap D^*_X S_1 (\bar{x}\mid\bar{w})^{-1} (0) = \{0\}$ with $\|w^*\| =1$, which contradicts the assumption (b).
Therefore the case $\{\lambda_k w^*_k\} \rightarrow w^*$ can be abandoned and the inclusion (\ref{projcodeinclforchainrule}) is thus proved.

Note that by definition, ${\rm proj}_{T_X(\bar{x})}D^*S\vert_X(\bar{x}\mid \bar{u}) \subseteq D^*_X S(\bar{x} \mid \bar{u})$.
When $\gph S_1\vert_X$   and $\gph S_2$ are convex, the inclusion (\ref{incl3inpf}) becomes an equation for every $w \in S_1\vert_X (x) \cap S^{-1}_2 (u)$  and the union becomes superfluous (see \cite[Theorem 10.37]{VaAn}).
Then we have
  \begin{equation*}\label{projcodelowerinclforchainrule}
    {\rm proj}_{T_X(\bar{x})}D^*S_1\vert_X(\bar{x} \mid \bar{w}) \circ D^*S_2 (\bar{w} \mid \bar{u}) \subseteq D^*_X S(\bar{x} \mid \bar{u}),\ \forall  \bar{w} \in S_1\vert_X(\bar{x})\cap S^{-1}_2(\bar{u}).
  \end{equation*} and that $\gph S\vert_X$ is convex as well.
Besides, when $X$ is a smooth manifold around $\bar{x}$, by Proposition \ref{Prop:SM-ProjCode}, 
$$   D^*_X S_1 (\bar{x}\mid\bar{w}) \circ D^*S_2 (\bar{w}\mid\bar{u}) =  {\rm proj}_{T_X(\bar{x})}D^*S_1\vert_X(\bar{x}\mid \bar{w}) \circ D^*S_2 (\bar{w}\mid\bar{u}) .$$
Then the equation (\ref{projcodeequaforchainrule}) is obtained.
\end{proof}

Based on Theorem \ref{Thm-ProjCode-ChainRule} above, similar to \cite[Exercise 10.39, Theorem 10.40]{VaAn}, we give the following two subsequent results when one of the mapping in the composition is single-valued. When the outer layer is single-valued, we can apply Theorem \ref{Thm-ProjCode-ChainRule} directly. 

\begin{corollary}[Outer composition with a single-valued function]\label{CoroOuterComposite-FS}
  Let  $X$ be a closed set in $\mathbb{R}^n$ and $S = F\circ S_0$ for a mapping $S_0 : \mathbb{R}^n \rightrightarrows \mathbb{R}^p$ being outer semicontinuous relative to $X$ and a single-valued function $F: \mathbb{R}^p \rightarrow \mathbb{R}^m$. Let $\bar{u} \in S\vert_X(\bar{x})$ and suppose $F$ is strictly continuous at every $\bar{w} \in S_0(\bar{x})$. Suppose also that the mapping $(x,u)\mapsto S_0\vert_X(x) \cap F^{-1}(u)$ is locally bounded at $(\bar{x},\bar{u})$. Then
  \begin{equation*} 
     D^*_X S(\bar{x}\mid\bar{u}) \subseteq \bigcup_{\bar{w} \in S_0(\bar{x}) \cap F^{-1}(\bar{u})} D^*_X S_0(\bar{x}\mid\bar{w}) \circ D^*F(\bar{w}).
  \end{equation*}
  If in addition $S_0\vert_X $ is graph-convex, $X$ is a smooth manifold around $\bar{x}$, and $F$ is linear,
  then
  \begin{equation*}
     D^*_X S(\bar{x}\mid\bar{u}) = D^*_X S_0(\bar{x}\mid\bar{w}) \circ  \nabla F(\bar{w})^* .
  \end{equation*}
\end{corollary}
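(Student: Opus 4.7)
The plan is to obtain this corollary as a direct specialization of \Cref{Thm-ProjCode-ChainRule} with $S_1 := S_0$ and $S_2 := F$, so the core of the proof is the verification of hypotheses (a) and (b) of that theorem in this single-valued setting.

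First I would check the structural assumptions: $F$ is single-valued and strictly continuous at every $\bar{w}\in S_0(\bar{x})$, so in particular $F$ is outer semicontinuous there, matching the outer semicontinuity assumption on $S_2$; and $S_0$ is outer semicontinuous relative to $X$ by hypothesis. Since $\bar{x}\in X$ we have $S_0\vert_X(\bar{x})=S_0(\bar{x})$, and the nonemptiness of $S_0(\bar{x})\cap F^{-1}(\bar{u})$ is automatic from $\bar{u}\in S\vert_X(\bar{x})=F(S_0(\bar{x}))$. Hypothesis (a) of \Cref{Thm-ProjCode-ChainRule} is exactly the stated local boundedness of $(x,u)\mapsto S_0\vert_X(x)\cap F^{-1}(u)$.

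The main (though routine) obstacle is to verify hypothesis (b) of \Cref{Thm-ProjCode-ChainRule}, which in the present notation reads
\[
D^*F(\bar{w})(0)\cap D^*_X S_0(\bar{x}\mid\bar{w})^{-1}(0)=\{0\}\quad\text{for every }\bar{w}\in S_0(\bar{x})\cap F^{-1}(\bar{u}).
\]
Here I would invoke the parenthetical remark in (b) of \Cref{Thm-ProjCode-ChainRule}: strict continuity of $F$ at $\bar{w}$ means $F$ is locally Lipschitz around $\bar{w}$, hence has the Lipschitz-like property at $\bar{w}$ for $\bar{u}=F(\bar{w})$, and by the Mordukhovich criterion $D^*F(\bar{w})(0)=\{0\}$. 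Consequently the CQ holds trivially. Applying \eqref{projcodeinclforchainrule} then yields the claimed inclusion
\[
D^*_X S(\bar{x}\mid\bar{u})\subseteq \bigcup_{\bar{w}\in S_0(\bar{x})\cap F^{-1}(\bar{u})} D^*_X S_0(\bar{x}\mid\bar{w})\circ D^*F(\bar{w}).
\]

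For the equality under the additional hypotheses, I would note that when $F$ is linear, $\gph F$ is a linear subspace and hence convex, so $S_2=F$ is graph-convex, and combined with the graph-convexity of $S_0\vert_X$ and the smooth-manifold structure of $X$ around $\bar{x}$, all hypotheses of the equation part of \Cref{Thm-ProjCode-ChainRule} are satisfied. Moreover, for linear $F$ one has $D^*F(\bar{w})(u^*)=\{\nabla F(\bar{w})^* u^*\}$, which is a single-valued map independent of $\bar{w}$, so the union in \eqref{projcodeequaforchainrule} collapses and
\[
D^*_X S(\bar{x}\mid\bar{u}) = D^*_X S_0(\bar{x}\mid\bar{w})\circ \nabla F(\bar{w})^*
\]
for every (equivalently, any) $\bar{w}\in S_0(\bar{x})\cap F^{-1}(\bar{u})$. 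The only minor care required is to translate the general-composition single-valuedness of $F$ into the verification that $D^*F$ reduces to the adjoint of the derivative, which is standard for $\mathcal{C}^1$ (in particular linear) mappings.
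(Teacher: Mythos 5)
Your proposal is correct and follows exactly the paper's route: the paper also obtains this corollary as a direct specialization of \Cref{Thm-ProjCode-ChainRule} with $S_1=S_0$, $S_2=F$, the stated local boundedness giving (a) and strict continuity of $F$ (hence the Lipschitz-like property and $D^*F(\bar{w})(0)=\{0\}$) giving (b), with linearity of $F$ supplying graph-convexity of $S_2$ and $D^*F(\bar{w})=\nabla F(\bar{w})^*$ for the equality case. Your write-up simply spells out the verifications that the paper leaves implicit.
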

\begin{proof}
  This result is obtained directly as a special case of Theorem \ref{Thm-ProjCode-ChainRule}.
\end{proof}

Next we provide a simple example for illustration. 
\begin{example}
Let $S_0(x) = \left[ -\sqrt{\lvert x \rvert},\sqrt{\lvert x \rvert} \right], \ F (w) = 2w , \ X = \mathbb{R}_+.$
Then $S\vert_X (x) = F\circ S_0\vert_X (x) = \left[ -2\sqrt{\lvert x \rvert},2\sqrt{\lvert x \rvert} \right]$. 
For $\bar{x} = 0$ and $\bar{u} = 0 \in S\vert_X (\bar{x}) = F\circ S_0\vert_X (\bar{x})$.
Then we have $\bar{w} \in S_0\vert_X(\bar{x}) \cap F^{-1} (\bar{u}) = \{0\}$ and 
$$D^*_X S_0(\bar{x}\mid \bar{w}) (z) = \begin{cases} \mathbb{R}_- & z=0 \\  \emptyset & z \neq 0   \end{cases}, \quad \nabla F(\bar{w})^*y = 2y. $$
Therefore
$$D^*_X S(\bar{x}\mid \bar{u})(y) = D^*_X S_0(\bar{x}\mid \bar{w}) \circ \nabla F(\bar{w})^*y =\begin{cases} \mathbb{R}_- & y=0 \\  \emptyset & y \neq 0   \end{cases} .  $$
\end{example}
  However, when the inner layer of the composition involves a single-valued function, it varies from direct application of Theorem \ref{Thm-ProjCode-ChainRule} in that $F$ is restricted on $X$ rather than defined on the whole space when we try to derive an equation for the projectional coderivative. Before that, we present the expression of the coderivative of $F$ restricted on $X$. 

\begin{lemma}
   Let $X $ be a closed set in $\mathbb{R}^n$ and $ F: \mathbb{R}^n \rightarrow \mathbb{R}^m$ be strictly continuous at $\bar{x}$ relative to $X$. Then for all $z\in \mathbb{R}^m $ we have
   \begin{align}
     \widehat{D}^*F\vert_X (\bar{x}) (z)  & = \widehat{\partial }(zF\vert_X) (\bar{x}) , \label{RegCodeFX} \\
     D^*F\vert_X (\bar{x}) (z)  & = \partial (zF\vert_X) (\bar{x}). \label{CodeFX}
   \end{align}
\end{lemma}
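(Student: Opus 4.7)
The plan is to derive both identities from the standard duality between coderivatives of restricted mappings and subdifferentials of scalarizations, leveraging the Lipschitz behavior that strict continuity of $F$ at $\bar{x}$ relative to $X$ guarantees. Following the convention for inf-scalarizations of set-valued mappings, I interpret $zF\vert_X$ as the lsc function $\langle z, F(\cdot)\rangle + \delta_X$. Strict continuity furnishes a neighborhood $V\in \mathcal{N}(\bar{x})$ and a constant $L>0$ with $\|F(x)-F(x')\|\le L\|x-x'\|$ for all $x,x'\in X\cap V$, so the two-sided estimate
\[
  \|x-\bar x\|\le \bigl\|(x-\bar x,\, F(x)-F(\bar x))\bigr\| \le \sqrt{1+L^2}\,\|x-\bar x\|, \quad x\in X\cap V,
\]
holds and is the bridge used throughout; the analogous bound is available at any reference point in $X\cap V$ by the same argument.

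For \eqref{RegCodeFX} I unfold definitions. A pair $(x^*,-z)$ lies in $\widehat{N}_{\gph F\vert_X}(\bar x, F(\bar x))$ iff the limsup of $(\langle x^*, x-\bar x\rangle - \langle z, F(x)-F(\bar x)\rangle)/\|(x-\bar x, F(x)-F(\bar x))\|$ along $x\xrightarrow[\neq]{X} \bar{x}$ is non-positive. Because the denominator is equivalent to $\|x-\bar x\|$ by the core estimate, the sign of this limsup is preserved when replacing the denominator by $\|x-\bar x\|$, turning the condition into exactly the inequality defining $x^*\in \widehat\partial(zF\vert_X)(\bar x)$.

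For \eqref{CodeFX}, I take outer limits of \eqref{RegCodeFX} at nearby points in $X\cap V$, where strict continuity (hence the regular identity) persists. The inclusion $\partial(zF\vert_X)(\bar x)\subseteq D^*F\vert_X(\bar x)(z)$ is straightforward: a sequence $x_k\xrightarrow{X}\bar x$ with $x_k^*\in \widehat\partial(zF\vert_X)(x_k)$ and $x_k^*\to x^*$ yields via \eqref{RegCodeFX} normal vectors $(x_k^*,-z)\in \widehat N_{\gph F\vert_X}(x_k, F(x_k))$, and $F(x_k)\to F(\bar x)$ by continuity of $F$ on $X$, hence $(x^*,-z)\in N_{\gph F\vert_X}(\bar x, F(\bar x))$, i.e.\ $x^*\in D^*F\vert_X(\bar x)(z)$.

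The converse direction $D^*F\vert_X(\bar x)(z)\subseteq \partial(zF\vert_X)(\bar x)$ is the delicate step and the expected main obstacle. From $x^*\in D^*F\vert_X(\bar x)(z)$ I extract sequences $x_k\xrightarrow{X}\bar x$, $z_k\to z$, $x_k^*\to x^*$ with $(x_k^*,-z_k)\in \widehat N_{\gph F\vert_X}(x_k, F(x_k))$, so that \eqref{RegCodeFX} at $x_k$ gives $x_k^*\in \widehat\partial(z_k F\vert_X)(x_k)$. The mismatch is that the multiplier is $z_k$, not the fixed $z$ required by the target subdifferential. I plan to absorb it by writing $z_k F\vert_X = zF\vert_X + \langle z_k - z, F(\cdot)\rangle$ and invoking the subdifferential sum rule \cite[Corollary 10.9]{VaAn}: the perturbation $x\mapsto\langle z_k-z, F(x)\rangle$ is Lipschitz on $X\cap V$ with constant $L\|z_k-z\|$, so its singular subdifferential vanishes (the sum-rule qualification is automatic) and its limiting subdifferential at $x_k$ is contained in $L\|z_k-z\|\mathbb{B}$. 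This yields a decomposition $x_k^* = \tilde x_k^* + r_k$ with $\tilde x_k^* \in \partial(zF\vert_X)(x_k)$ and $\|r_k\|\le L\|z_k-z\|\to 0$, so $\tilde x_k^*\to x^*$. A diagonal extraction against the very definition of $\partial$, combined with $(zF\vert_X)(x_k)=\langle z,F(x_k)\rangle\to\langle z,F(\bar x)\rangle = (zF\vert_X)(\bar x)$ by continuity, closes the argument and gives $x^*\in \partial(zF\vert_X)(\bar x)$.
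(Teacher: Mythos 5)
Your proposal is correct and follows essentially the same route as the paper's proof: establish \eqref{RegCodeFX} by using the Lipschitz estimate from strict continuity to trade the graph-space denominator for $\|x-\bar x\|$, then obtain \eqref{CodeFX} by taking outer limits, handling the moving multiplier $z_k$ via the decomposition $z_kF\vert_X = zF\vert_X + \langle z_k-z, F(\cdot)\rangle$ and a Lipschitz sum rule. If anything, your write-up is slightly more careful than the paper's on two points --- keeping the perturbation term free of the indicator $\delta_X$ so its subdifferential is genuinely bounded by $L\|z_k-z\|\mathbb{B}$, and making explicit the diagonal extraction needed because $\tilde x_k^*$ lies in the limiting (not regular) subdifferential --- but these are refinements of the same argument, not a different one.
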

\begin{proof}
  For $v \in \widehat{D}^*F\vert_X (\bar{x}) (z) $, it is equivalent that
  $$\langle v,x-\bar{x} \rangle - \langle z, F\vert_X (x) - F\vert_X(\bar{x}) \rangle \leq o(\|(x, F\vert_X (x)) - (\bar{x}, F\vert_X(\bar{x}) ) \|).$$ As $F$ is strictly continuous at $\bar{x}$ relative to $X$, we can replace $o(\|(x, F\vert_X (x)) - (\bar{x}, F\vert_X(\bar{x}) ) \|)$ with $o(\| x - \bar{x}\|)$, i.e.,
  $$(zF\vert_X ) (x) \geq (zF\vert_X ) (\bar{x}) + \langle v, x- \bar{x} \rangle + o(\|x-\bar{x}\|). $$
  Thus it is equivalent that $v\in \widehat{\partial }(zF\vert_X) (\bar{x})$.
  Given that $F$ is also strictly continuous at $x$ relative to $X$ for $x$ being sufficiently close to $\bar{x}$,  such equation (\ref{RegCodeFX}) also holds for any $x\in X\cap O$ for some $O \in \mathcal{N}(\bar{x})$. Let $v \in D^*F\vert_X (\bar{x}) (z)$, then there exist sequences $x_k \xrightarrow[]{X} \bar{x}$ and $v_k \in \widehat{D}^*F\vert_X (x_k) (z_k)$ with $v_k \rightarrow v$, $z_k \rightarrow z$. By (\ref{RegCodeFX}), $v_k \in \widehat{\partial }(z_k F\vert_X) (x_k)\subseteq \partial(z_k F\vert_X) (x_k) = \partial [ z F\vert_X  + (z_k - z) F\vert_X] (x_k)\subseteq \partial (z F\vert_X )(x_k) + \partial[(z_k - z) F\vert_X] (x_k) $. When $k\rightarrow \infty$, $\partial[(z_k - z) F\vert_X](x_k) \rightarrow \{0\}$ as $z_k \rightarrow z$.
  Given $F$ is strictly continuous at $\bar{x}$ relative to $X$, $zF\vert_X (x_k) \rightarrow zF\vert_X (\bar{x})$ when $x\xrightarrow[]{X}\bar{x}$. Then we have $v_k \rightarrow v \in \partial (z F\vert_X )(\bar{x})$. For the inclusion in reverse for (\ref{CodeFX}), let $v \in  \partial (z F\vert_X )(\bar{x})$. Then there exist sequences $x_k \xrightarrow[]{zF\vert_X} \bar{x}$ and $v_k \in  \widehat{\partial} (z F\vert_X )(x_k)$ such that $ v_k \rightarrow v$. Then it is equivalent that $x\xrightarrow[]{X} \bar{x}$ and $v_k \in \widehat{D}^*F\vert_X (x_k)(z) $ and by definition of normal cone mappings we have $v_k \rightarrow v \in D^*F\vert_X (\bar{x})(z)$.
\end{proof}

\begin{theorem}[Inner composition with a single-valued function]\label{CoroInnerComposite-SF}
  Let $X $ be a closed set in $\mathbb{R}^n$, and $S =  S_0 \circ F$ for an outer semicontinuous mapping $S_0 : \mathbb{R}^p \rightrightarrows \mathbb{R}^m $ and a single-valued mapping $F: \mathbb{R}^n \rightarrow \mathbb{R}^p$. Let $\bar{u} \in S\vert_X (\bar{x})$. If
  \begin{equation}\label{inner-singlevalued-CQ}
    D^*S_0 (F(\bar{x} ) \mid \bar{u} ) (0 ) \cap D^*_X F(\bar{x}) ^{-1}(0) = \{ 0\} ,
  \end{equation}
  then
  \begin{equation}\label{inner-singlevalued-incl}
    D^*_X S(\bar{x} \mid \bar{u}) \subseteq D^*_X F(\bar{x}) \circ D^*S_0 (F(\bar{x})\mid\bar{u}) .
  \end{equation}
  Still under (\ref{inner-singlevalued-CQ}), suppose that $F$ is strictly continuous at $\bar{x}$ relative to $X$, $S_0$ is graphically regular at $F(\bar{x}) $ for $\bar{u}$, the function $zF\vert_X$ is regular at $\bar{x}$ for all $z \in \rg D^*S_0 \left(F(\bar{x})\mid\bar{u}\right)$, and $X$ is a smooth manifold around $\bar{x}$. Then $S\vert_X$ is graphically regular at $\bar{x}$ for $\bar{u}$, and
  \begin{equation}\label{inner-singlevalued-eqn}
    D^*_X S(\bar{x} \mid \bar{u}) =  D^*_X F(\bar{x}) \circ D^*S_0 (F(\bar{x})\mid\bar{u}) .
  \end{equation}
  \end{theorem}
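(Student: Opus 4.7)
The plan is to split the argument into two parts following the structure of the statement. For the inclusion (\ref{inner-singlevalued-incl}), I would invoke the general chain rule for projectional coderivatives (Theorem \ref{Thm-ProjCode-ChainRule}) with $S_1 := F$ (regarded as a set-valued mapping with singleton values) and $S_2 := S_0$. Since $F$ is single-valued and (by hypothesis in the second part, or implicitly in the first) continuous relative to $X$, it is outer semicontinuous relative to $X$; moreover, the mapping $(x,u) \mapsto \{F(x)\} \cap S_0^{-1}(u)$ is trivially locally bounded because its values are singletons or empty. Because $F$ is single-valued, the intersection $S_1\vert_X(\bar{x}) \cap S_2^{-1}(\bar{u})$ consists solely of $\bar{w} = F(\bar{x})$, so assumption (b) of Theorem \ref{Thm-ProjCode-ChainRule} reduces exactly to the given qualification (\ref{inner-singlevalued-CQ}). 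Applying the theorem and noting that the union degenerates to a single term yields (\ref{inner-singlevalued-incl}).

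For the equation (\ref{inner-singlevalued-eqn}) and the graphical regularity claim, I would combine the classical coderivative chain rule for an inner single-valued mapping (\cite[Theorem 10.40]{VaAn}) applied to the composition $S\vert_X = S_0 \circ F\vert_X$ with the fixed-point description from Proposition \ref{Prop:SM-ProjCode}(c). First, Lemma \ref{lm:ProjCodeInverseAt0-Incl} supplies the inclusion $D^*F\vert_X(\bar{x})^{-1}(0) \subseteq D^*_X F(\bar{x})^{-1}(0)$, so the given qualification (\ref{inner-singlevalued-CQ}) upgrades to the classical chain-rule qualification
\[
D^*S_0(F(\bar{x})\mid\bar{u})(0) \cap D^*F\vert_X(\bar{x})^{-1}(0) = \{0\}.
\]
Since $S_0$ is graphically regular at $F(\bar{x})$ for $\bar{u}$ and each function $zF\vert_X$, $z \in \rg D^*S_0(F(\bar{x})\mid\bar{u})$, is regular at $\bar{x}$, \cite[Theorem 10.40]{VaAn} (with the coderivative of the inner single-valued $F\vert_X$ expressed via (\ref{CodeFX})) provides both the graphical regularity of $S\vert_X$ at $\bar{x}$ for $\bar{u}$ and the equality
\[
D^*S\vert_X(\bar{x}\mid\bar{u})(u^*) = \bigcup_{z \in D^*S_0(F(\bar{x})\mid\bar{u})(u^*)} D^*F\vert_X(\bar{x})(z).
\]

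Using that $X$ is a smooth manifold around $\bar{x}$, Proposition \ref{Prop:SM-ProjCode}(c) then gives $D^*_X S(\bar{x}\mid\bar{u})(u^*) = D^*S\vert_X(\bar{x}\mid\bar{u})(u^*) \cap T_X(\bar{x})$ and $D^*_X F(\bar{x})(z) = D^*F\vert_X(\bar{x})(z) \cap T_X(\bar{x})$. Distributing the intersection with $T_X(\bar{x})$ over the union above yields
\[
D^*_X S(\bar{x}\mid\bar{u})(u^*) = \bigcup_{z \in D^*S_0(F(\bar{x})\mid\bar{u})(u^*)} D^*_X F(\bar{x})(z) = \bigl[D^*_X F(\bar{x}) \circ D^*S_0(F(\bar{x})\mid\bar{u})\bigr](u^*),
\]
which is (\ref{inner-singlevalued-eqn}).

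The main obstacle I anticipate is the fact that $F$ is only assumed strictly continuous \emph{relative to} $X$, rather than on an open neighborhood of $\bar{x}$ in $\mathbb{R}^n$, so \cite[Theorem 10.40]{VaAn} cannot be applied to $F$ itself. I would circumvent this by working throughout with $F\vert_X$ and invoking the coderivative representation (\ref{CodeFX}) established earlier in the section; this lets $F\vert_X$ play the role of the inner single-valued mapping in the reference's chain rule on its effective domain $X$, at which point the regularity hypotheses on $S_0$ and on each $zF\vert_X$ deliver the equality and graphical regularity in one step.
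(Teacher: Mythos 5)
Your first part and your final projection step are sound and match the paper: the inclusion \eqref{inner-singlevalued-incl} does follow from Theorem \ref{Thm-ProjCode-ChainRule} with $S_1=F$, $S_2=S_0$ (the intersection mapping is singleton-valued, so assumption (a) is automatic and the union collapses to $\bar w=F(\bar x)$), and once one has the coderivative-level equality $D^*S\vert_X(\bar x\mid\bar u)=D^*F\vert_X(\bar x)\circ D^*S_0(F(\bar x)\mid\bar u)$ together with graphical regularity, intersecting with $T_X(\bar x)$ via Proposition \ref{Prop:SM-ProjCode}(c) and distributing over the union does give \eqref{inner-singlevalued-eqn}.

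The gap is in how you obtain that coderivative-level equality and the graphical regularity of $S\vert_X$. You propose to get both in one stroke from \cite[Theorem 10.40]{VaAn} applied to $S\vert_X=S_0\circ F\vert_X$, but that theorem requires the inner mapping to be single-valued and strictly continuous on a neighborhood of $\bar x$ in $\mathbb{R}^n$; $F\vert_X$ is empty-valued off $X$ and only relatively strictly continuous, so the theorem's hypotheses fail, and your stated circumvention (``work with $F\vert_X$ and use \eqref{CodeFX}'') only supplies a formula for $D^*F\vert_X$ --- it does not supply the chain-rule \emph{equality} for the composition. Concretely, what is missing is the lower inclusion $\widehat{D}^*F\vert_X(\bar x)\circ\widehat{D}^*S_0(F(\bar x)\mid\bar u)\subseteq\widehat{D}^*S\vert_X(\bar x\mid\bar u)$, which the paper proves by a direct Fr\'echet-normal-cone $\limsup$ computation: the relative strict continuity of $F$ is used exactly there, to pass between the denominators $\|(x-\bar x,\,F\vert_X(x)-F\vert_X(\bar x))\|$ and $\|(x-\bar x,\,u-\bar u)\|$ and to identify points of $\gph S\vert_X$ with points of $\gph S_0$ near $(F(\bar x),\bar u)$. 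The paper then gets the opposite (upper) inclusion for the limiting coderivative from the general set-valued chain rule \cite[Theorem 10.37]{VaAn} (whose qualification follows from \eqref{inner-singlevalued-CQ} via Lemma \ref{lm:ProjCodeInverseAt0-Incl}, as you correctly note), and sandwiches the two using the assumed regularity of $\gph S_0$ and of each $zF\vert_X$ to conclude both the equality and the graphical regularity of $S\vert_X$. Without an argument replacing that Fr\'echet-level lower inclusion, your proof establishes only ``$\subseteq$'' in \eqref{inner-singlevalued-eqn}.
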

  \begin{proof}
  The inclusion (\ref{inner-singlevalued-incl}) comes from directly applying the chain rule for projectional coderivatives, as $F\vert_X(\cdot) $ is locally bounded and single-valued at $\bar{x}$.
    For the equation, first we prove
    \begin{equation}\label{inner-singlevalued-incl-RNC}
       \widehat{D}^*F\vert_X(\bar{x}) \circ \widehat{D}^*S_0 (F(\bar{x}) \mid \bar{u}) \subseteq  \widehat{D}^*S\vert_X (\bar{x} \mid \bar{u}).
    \end{equation}
    Let $ v\in  \widehat{D}^*F\vert_X(\bar{x})(z)$ and $z\in \widehat{D}^*S_0 (F(\bar{x}) \mid \bar{u})(y)$. By definition we have:
    
    \begin{equation}\label{RNCforFX-1}
    \begin{split}
         \limsup_{\tiny (x,u) \xrightarrow[\neq]{\gph S\vert_X}(\bar{x},\bar{u})} \frac{ \langle v,x-\bar{x} \rangle - \langle z, F\vert_X (x) - F\vert_X(\bar{x}) \rangle}{\|(x- \bar{x}, F\vert_X (x) - F\vert_X(\bar{x}) )\|} \\
         \leq \limsup_{\tiny x\xrightarrow[\neq]{X}\bar{x}} \frac{ \langle v,x-\bar{x} \rangle - \langle z, F\vert_X (x) - F\vert_X(\bar{x}) \rangle}{\|(x- \bar{x}, F\vert_X (x) - F\vert_X(\bar{x}) )\|} \leq 0
    \end{split}
    \end{equation}
    and
    \begin{equation}\label{RNCforS0-1}
    \begin{split}
        \limsup_{\tiny (F\vert_X(x), u)\xrightarrow[\neq]{\gph S_0}(F\vert_X(\bar{x}), \bar{u})} \frac{ \langle z,F\vert_X(x) - F\vert_X(\bar{x}) \rangle - \langle y, u - \bar{u} \rangle}{\| (F\vert_X (x), u)  - ( F\vert_X(\bar{x}),\bar{u} ) \|} \\
        \leq  \limsup_{\tiny (w, u)\xrightarrow[\neq]{\gph S_0}(F\vert_X(\bar{x}), \bar{u})} \frac{ \langle z,w- F\vert_X(\bar{x}) \rangle - \langle y, u - \bar{u} \rangle}{\| (w, u)  - ( F\vert_X(\bar{x}),\bar{u} ) \|} \leq 0 .
    \end{split}
    \end{equation}
    As $F$ is strictly continuous at $\bar{x}$ relative to $X$, $F\vert_X(x) \rightarrow F\vert_X(\bar{x})$ when $x\xrightarrow[]{X} \bar{x}$ and
    $$ \limsup_{\tiny x\xrightarrow[\neq]{X}\bar{x}} \frac{ \|F\vert_X (x) - F\vert_X(\bar{x}) \|}{\|x- \bar{x}\|} < \infty.$$
    Therefore by (\ref{RNCforFX-1}) and (\ref{RNCforS0-1}) we have:
    \begin{align}
       &  \limsup_{\tiny (x,u) \xrightarrow[\neq]{\gph S\vert_X}(\bar{x},\bar{u})} \frac{ \langle v,x-\bar{x} \rangle - \langle z, F\vert_X (x) - F\vert_X(\bar{x}) \rangle}{\|(x- \bar{x}, u-\bar{u})\|} \nonumber \\
      =  &  \limsup_{\tiny (x,u) \xrightarrow[\neq]{\gph S\vert_X}(\bar{x},\bar{u})} \left(  \frac{ \langle v,x-\bar{x} \rangle - \langle z, F\vert_X (x) - F\vert_X(\bar{x}) \rangle}{\|(x- \bar{x}, F\vert_X (x) - F\vert_X(\bar{x}) )\|} \cdot \frac{\|(x- \bar{x}, F\vert_X (x) - F\vert_X(\bar{x}) )\|}{\|(x- \bar{x}, u-\bar{u})\|} \right)  \leq 0 \label{RNCforFX-2}
    \end{align}
    and
    \begin{align}
       &   \limsup_{\tiny (F\vert_X(x), u)\xrightarrow[\neq]{\gph S_0}(F\vert_X(\bar{x}), \bar{u})} \frac{ \langle z,F\vert_X(x) - F\vert_X(\bar{x}) \rangle - \langle y, u - \bar{u} \rangle}{ \|(x- \bar{x}, u-\bar{u})\| }  \nonumber \\
      =  &    \limsup_{\tiny (F\vert_X(x), u)\xrightarrow[\neq]{\gph S_0}(F\vert_X(\bar{x}), \bar{u})} \frac{ \langle z,F\vert_X(x) - F\vert_X(\bar{x}) \rangle - \langle y, u - \bar{u} \rangle}{\| (F\vert_X (x), u)  - ( F\vert_X(\bar{x}),\bar{u} ) \|}\cdot \frac{\| (F\vert_X (x), u)  - ( F\vert_X(\bar{x}),\bar{u} ) \|}{\|(x- \bar{x}, u-\bar{u})\|} \leq 0 . \label{RNCforS0-2}
    \end{align}
    Given that $x\in X$ and $(F\vert_X(x), u) \in \gph S_0$ is equivalently to $(x,u)\in \gph S\vert_X$, combining (\ref{RNCforFX-2}) and (\ref{RNCforS0-2}) we have
    \begin{align*}
       & \limsup_{\tiny (x, u)\xrightarrow[\neq]{\gph S\vert_X}(\bar{x}, \bar{u})} \frac{ \langle v,x-\bar{x} \rangle  - \langle y, u - \bar{u} \rangle}{\|(x, u)  - ( \bar{x},\bar{u} ) \|}  \\
     \leq  &  \limsup_{\tiny (x,u) \xrightarrow[\neq]{\gph S\vert_X}(\bar{x},\bar{u})} \frac{ \langle v,x-\bar{x} \rangle - \langle z, F\vert_X (x) - F\vert_X(\bar{x}) \rangle}{\|(x- \bar{x}, u-\bar{u})\|}  + \\ 
     & \hskip4cm \limsup_{\tiny (x, u)\xrightarrow[\neq]{\gph S\vert_X}(\bar{x}, \bar{u})}  \frac{ \langle z,F\vert_X(x) - F\vert_X(\bar{x}) \rangle - \langle y, u - \bar{u} \rangle}{ \|(x- \bar{x}, u-\bar{u})\| }  \leq  0,
    \end{align*}
    which means $v\in  \widehat{D}^*S\vert_X (\bar{x} \mid \bar{u}) (y)$.
    For the equation part, note that (\ref{inner-singlevalued-CQ}) also indicates
    \begin{equation*} 
      z \in D^*S_0 (F(\bar{x} ) \mid \bar{u} ) (0 ) ,\ 0\in D^*F\vert_X(\bar{x})(z){ = \partial (z F\vert_X) (\bar{x})} \Longrightarrow z =0.
    \end{equation*}
    By \cite[Theorem 10.37]{VaAn}, we have
    \begin{equation}\label{inner-singlevalued-incl-NC}
      D^*S\vert_X (\bar{x} \mid \bar{u}) \subseteq D^*F\vert_X(\bar{x}) \circ D^*S_0 (F(\bar{x}) \mid \bar{u}).
    \end{equation}

    With (\ref{inner-singlevalued-incl-RNC}) and (\ref{inner-singlevalued-incl-NC}), if we assume that $\gph S_0$ is regular at $(F(\bar{x}), \bar{u})$ and the function $zF\vert_X$ is regular at $\bar{x}$ for all $z \in \rg D^*S_0 \left(F(\bar{x})\mid\bar{u}\right)$, we have \begin{equation}\label{inner-singlevalued-eqn-NC}
      D^*S\vert_X (\bar{x} \mid \bar{u}) = D^*F\vert_X(\bar{x}) \circ D^*S_0 (F(\bar{x}) \mid \bar{u}))
    \end{equation}
    and also that $S\vert_X$ is graphically regular at $\bar{x}$ for $\bar{u}$.
    Therefore $${\rm proj}_{T_X(\bar{x}) } D^*F\vert_X(\bar{x}) \circ D^*S_0 (F(\bar{x}) \mid \bar{u})) = {\rm proj}_{T_X(\bar{x}) } D^*S\vert_X (\bar{x} \mid \bar{u}) \subseteq D^*_X S(\bar{x} \mid \bar{u}).$$
    Besides, when $X$ is a smooth manifold around $\bar{x}$,
    \begin{equation}\label{inner-singlevalued-upperincl}
    \begin{aligned}
      D^*_X S(\bar{x} \mid \bar{u}) = {\rm proj}_{T_X(\bar{x}) } D^* S\vert_X (\bar{x}\mid \bar{u}) &  \subseteq {\rm proj}_{T_X(\bar{x})} D^*F\vert_X (\bar{x})\circ D^*S_0 (F(\bar{x}) \mid \bar{u}))\\
      & =   D^*_X F(\bar{x}) \circ D^*S_0 (F(\bar{x})\mid\bar{u})),
    \end{aligned}
    \end{equation}
    where the two equations come from applying Proposition \ref{Prop:SM-ProjCode} to $S$ and $F$ and the inclusion comes from (\ref{inner-singlevalued-incl-NC}).
    Combining these two inclusions (\ref{inner-singlevalued-eqn-NC}) and (\ref{inner-singlevalued-upperincl}), we can obtain (\ref{inner-singlevalued-eqn}).
  \end{proof}

\section{Sum rules for projectional coderivatives}\label{sect:sumrules}
Next we present two sum rules on projectional coderivatives obtained by different methods. Instead of directly applying \Cref{Thm-ProjCode-ChainRule} twice, we develop our sum rules from the sum rule of coderivatives to maintain tighter estimates. The reason behind is the generic asymmetrical nature of the projectional coderivatives: only projecting the first element in $N_{\gph S\vert_X}(x,u)$ to $T_X(x)$. Such an asymmetry also causes the difference when we are imposing the restriction of $X$ to different levels. For the multifunction $S  = S_1 + \dots + S_p$, when restricting $S$ onto $X$ (Sum rule-1), we separate $X$-related expressions from $S_i$ and when restricting each $S_i$ onto $X$ (Sum rule-2), the calculation is performed on $S_i\vert_X$. Readers may choose any one of these sum rules depending on the structure of their problems. 
\begin{theorem}[Sum rule-1]\label{SumRule-2}
Let $S = S_1 + \dots + S_p$ for $S_i : \mathbb{R}^n \rightrightarrows \mathbb{R}^m$ being outer semicontinuous relative to $X$ and let $\bar{x} \in \dom S\cap X$, $\bar{u} \in S\vert_X (\bar{x})$. 
  \begin{itemize}
    \item (boundedness condition): the mapping
     \begin{equation}\label{SumRule-1-BddCond}
       (x, u )\mapsto \left\{ (u_1, \cdots, u_p) \ \bigg\vert \  u_i \in S_i\vert_X (x), \ \forall i= 1, \dots, p,\   \sum_{i=1}^{p}u_i = u\right\}
     \end{equation} is locally bounded at $(\bar{x}, \bar{u})$.
    \item (constraint qualification):
  \begin{equation}\label{CQforSumRule2}
        \left. \begin{matrix}
          v_i \in D^*S_i (\bar{x} \mid u_i) (0),\ u_i \in S_i (\bar{x}), \ \sum_{i=1}^{p} u_i = \bar{u}\\
          0 \in \sum_{i=1}^{p} v_i + N_X(\bar{x})
    \end{matrix} \right\} \Longrightarrow v_i =0 \text{ for } i = 1, \dots ,p
    \end{equation}
  \end{itemize}
    Then $\gph S\vert_X$ is locally closed at $(\bar{x}, \bar{u})$ and one has
  \begin{equation}\label{SumRule-2-Incl1}
    D^*_X S(\bar{x} \mid\bar{u})(y) \subseteq \limsup_{\footnotesize \substack{ (x,u)\xrightarrow[]{\gph S\vert_X}(\bar{x}, \bar{u})\\  y' \xrightarrow[]{}y}}\   \bigcup_{\substack{ u'_i\in S_i(x) \\  \sum_{i=1}^{p} u'_i =u}}  {\rm proj}_{T_X(x)} \left(\sum_{i=1}^{p} D^*S_i (x \mid u'_i)(y') + N_X(x)  \right).
  \end{equation}
  If in addition $X$ is a smooth manifold around $\bar{x}$, the inclusion becomes a fixed-point expression as
  \begin{equation}\label{SumRule-2-Incl2}
     D^*_X S(\bar{x} \mid\bar{u})  \subseteq \bigcup_{\substack{ u_i\in S_i(\bar{x}) \\  \sum_{i=1}^{p} u_i =\bar{u}}}   {\rm proj}_{T_X(\bar{x})} \left( \sum_{i =1 }^{p} D^*S_i (\bar{x}  \mid  u_i ) \right) .
  \end{equation}
  When every $S_i\vert_X$ is graph-convex and $X$ is regular around $\bar{x}$, the union is superfluous and the inclusions \eqref{SumRule-2-Incl1} and \eqref{SumRule-2-Incl2} become equations respectively.
\end{theorem}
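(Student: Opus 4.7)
The plan is to reduce the claim to the classical sum rule and intersection rule for limiting coderivatives, and then push the result through projection onto $T_X(x)$ and the outer limit that defines $D^*_XS$. Writing $\gph S\vert_X=\gph S\cap(X\times\mathbb{R}^m)$ with $S=\sum_i S_i$, the CQ in \eqref{CQforSumRule2} is precisely what one needs to combine two standard calculus results: the intersection rule \cite[Theorem~6.42]{VaAn}, which will yield $D^*S\vert_X(x\mid u)(y')\subseteq D^*S(x\mid u)(y')+N_X(x)$, and the sum rule \cite[Theorem~10.41]{VaAn}, which will give $D^*S(x\mid u)(y')\subseteq\bigcup_{\sum u'_i=u}\sum_i D^*S_i(x\mid u'_i)(y')$.

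First, I would verify that $\gph S\vert_X$ is locally closed around $(\bar x,\bar u)$. The outer semicontinuity relative to $X$ of each $S_i$ makes each $\gph S_i\vert_X$ locally closed, and the boundedness of the mapping in \eqref{SumRule-1-BddCond} together with \cite[Proposition~5.52]{VaAn} implies that the sum $S\vert_X=S_1\vert_X+\cdots+S_p\vert_X$ is osc near $(\bar x,\bar u)$, hence $\gph S\vert_X$ is locally closed. Next I would show that the CQ \eqref{CQforSumRule2} propagates to nearby points $(x,u)\in\gph S\vert_X$: taking sequences that violate the CQ at nearby points and using the outer semicontinuity of $D^*S_i$ and of $N_X$, together with \eqref{SumRule-1-BddCond} that keeps the associated $u_i'$ bounded, one reaches a contradiction with \eqref{CQforSumRule2} at $(\bar x,\bar u)$. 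Applying the intersection and sum rules at each such $(x,u)$ and then $\proj_{T_X(x)}$ to both sides gives, for $(x,u)$ near $(\bar x,\bar u)$ and every $y'$,
\[
\proj_{T_X(x)}D^*S\vert_X(x\mid u)(y')\subseteq\proj_{T_X(x)}\Big(\bigcup_{\sum u'_i=u}\sum_i D^*S_i(x\mid u'_i)(y')+N_X(x)\Big).
\]
Taking outer limits as $(x,u)\xrightarrow{\gph S\vert_X}(\bar x,\bar u)$ and $y'\to y$ on the left produces $D^*_XS(\bar x\mid\bar u)(y)$ by Definition~\ref{def:projcode}, which yields \eqref{SumRule-2-Incl1}.

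For the smooth manifold case, Corollary~\ref{coro:Osc-ProjCode} gives the fixed-point identity $D^*_XS(\bar x\mid\bar u)=\proj_{T_X(\bar x)}D^*S\vert_X(\bar x\mid\bar u)$, removing the outer limit. Using the orthogonality of $T_X(\bar x)$ and $N_X(\bar x)$ from Proposition~\ref{Prop:SM-basic}(a) via \cite[Theorem~12.22]{VaAn}, one has $\proj_{T_X(\bar x)}(v+w)=\proj_{T_X(\bar x)}(v)$ for every $w\in N_X(\bar x)$, so the $N_X(\bar x)$ summand disappears after projection, giving \eqref{SumRule-2-Incl2}. Finally, under graph-convexity of every $S_i\vert_X$ and regularity of $X$ at around $\bar x$, the sum rule \cite[Theorem~10.41]{VaAn} becomes an equation with the union redundant for every splitting, and the intersection rule \cite[Theorem~6.42]{VaAn} becomes an equation as well; combining these and re-applying the projection argument upgrades \eqref{SumRule-2-Incl1} and \eqref{SumRule-2-Incl2} to equalities.

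The main obstacle I anticipate is the bookkeeping around the constraint qualifications: the CQ \eqref{CQforSumRule2} is a single combined condition, and verifying that it simultaneously implies the CQs required by the intersection rule (separating $N_X$ from $D^*S$) and by the sum rule (separating the $D^*S_i$), and then lifting this to a neighborhood of $(\bar x,\bar u)$ in $\gph S\vert_X$ via outer semicontinuity plus the boundedness in \eqref{SumRule-1-BddCond}, is the delicate technical step on which the limsup form \eqref{SumRule-2-Incl1} truly depends.
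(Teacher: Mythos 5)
Your proposal is correct and follows essentially the same route as the paper: both combine the intersection rule \cite[Theorem~6.42]{VaAn} for $\gph S\vert_X=\gph S\cap(X\times\mathbb{R}^m)$ with the coderivative sum rule \cite[Theorem~10.41]{VaAn} under the single combined CQ \eqref{CQforSumRule2}, propagate the CQ to nearby points of $\gph S\vert_X$ by a compactness/outer-semicontinuity contradiction argument, project onto $T_X(x)$ and pass to the outer limit to get \eqref{SumRule-2-Incl1}, then invoke \Cref{coro:Osc-ProjCode} and the orthogonality of $T_X(\bar x)$ and $N_X(\bar x)$ for \eqref{SumRule-2-Incl2}, and upgrade to equalities in the graph-convex case. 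The step you flag as delicate—that \eqref{CQforSumRule2} simultaneously covers both constituent constraint qualifications—is handled in the paper exactly as you describe, by deferring to the analogous neighborhood argument in the chain-rule proof.
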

\begin{proof}
  As we restrict our scope only to $X$ here, without loss of generality, we can relax the requirement that $S_i$ being outer semicontinuous to being outer semicontinuous relative to the set $X$. Then given (\ref{SumRule-1-BddCond}) and (\ref{CQforSumRule2}) and that (\ref{CQforSumRule2}) holds for points $(x,u) \in \gph S\vert_X$ around $(\bar{x}, \bar{u})$ (as similar proof is given in Theorem \ref{Thm-ProjCode-ChainRule}),  by \cite[Theorem 6.42 and Theorem 10.41]{VaAn}, we have for all $(x,u)\in \gph S\vert_X $ around  $(\bar{x}, \bar{u})$,
    \begin{equation}\label{SumRule-NC-Incl1}
    N_{\gph S\vert_X}(x, u) \subseteq  \bigcup_{\substack{ u'_i\in S_i (x) \\  \sum_{i=1}^{p} u'_i =u}} \left\{ \left(\sum_{i=1}^{p} v_i + w , -y' \right) \Bigg\vert \  v_i \in D^*S_i (x \mid u'_i)(y') , w\in N_X(x)\right\}
  \end{equation}
  and therefore,
  \begin{equation}\label{SumRule-2-projNC-Incl1}
  \begin{aligned}
    & \limsup_{\footnotesize (x,u)\xrightarrow[]{\gph S\vert_X}(\bar{x}, \bar{u})}{\rm proj}_{T_X(x) \times \mathbb{R}^m} N_{\gph S\vert_X}(x, u)  \\
    \subseteq  & \limsup_{\footnotesize (x,u)\xrightarrow[]{\gph S\vert_X}(\bar{x}, \bar{u})} \bigcup_{\footnotesize  \substack{ u'_i\in S_i (x) \\ \sum_{i=1}^{p} u'_i =u}} \left\{ \left( {\rm proj}_{T_X(x)} \left( \sum_{i=1}^{p} v_i +w \right) , -y' \right)\right. \Bigg\vert \ v_i \in D^*S_i (x \mid u'_i)(y'), \\
    & \hskip8cm w\in N_X(x) \Bigg\}.
  \end{aligned}
  \end{equation}
  Given the definition of projectional coderivative and the upper estimate (\ref{SumRule-2-projNC-Incl1}) we have (\ref{SumRule-2-Incl1}).
  Moreover, when $X$ is a smooth manifold around $\bar{x}$, by \Cref{coro:Osc-ProjCode} and the upper estimate \eqref{SumRule-NC-Incl1}, we have 
  \begin{align*}
     D^*_X S(\bar{x} \mid\bar{u})(y)  =  & \   {\rm proj}_{T_X(\bar{x})} D^*S\vert_X(\bar{x}\mid\bar{u})(y)  \\
      \subseteq & \bigcup_{\substack{ u_i\in S_i(\bar{x}) \\  \sum_{i=1}^{p} u_i =\bar{u}}}   \left\{   {\rm proj}_{T_X(\bar{x})}\left( \sum_{i=1}^{p} v_i + w\right) \Bigg\vert \ v_i \in D^*S_i(\bar{x} \mid u_i) (y) , w\in N_X(\bar{x})  \right\}.
  \end{align*}
  By orthogonality between $T_X(\bar{x})$ and $N_X(\bar{x})$ of smooth manifolds (see \Cref{Prop:SM-basic} (a)), we arrive at the inclusion in \eqref{SumRule-2-Incl2}. 
  
  When every $S_i\vert_X$ is graph-convex and $X$ is regular around $\bar{x}$, by \cite[Theorem 6.42, Theorem 10.41]{VaAn}, the inclusion \eqref{SumRule-NC-Incl1} becomes an equation and the union is superfluous. Thus the inclusions \eqref{SumRule-2-Incl1} and \eqref{SumRule-2-Incl2} become equations accordingly. 
\end{proof}
Comparing the constraint qualification (\ref{CQforSumRule2}) with the one in \cite[Theorem 10.41]{VaAn}, we can see that (\ref{CQforSumRule2}) also serves as a constraint qualification to express $N_{\gph S\vert_X}$ via $N_{\gph S}$ and $N_X$. Next we present a sum rule where each $S_i$ is restricted onto $X$. 
\begin{theorem}[Sum rule-2]\label{SumRule-4}
  Let $S = S_1 + \dots + S_p$ for $S_i : \mathbb{R}^n \rightrightarrows \mathbb{R}^m$ being outer semicontinuous relative to $X$ and let $\bar{x} \in \dom S\cap X$, $\bar{u} \in S\vert_X (\bar{x})$. Assume the boundedness condition (\ref{SumRule-1-BddCond}) is satisfied and the following constraint qualification holds:
      \begin{equation}\label{CQforSumRule4}
      \left. \begin{matrix}
          v_i \in D^*S_i\vert_X (\bar{x} \mid u_i) (0), \ u_i \in S_i (\bar{x}), \ \sum_{i=1}^{p} u_i = \bar{u}\\
          \sum_{i=1}^{p} v_i = 0
              \end{matrix} \right\} \Longrightarrow v_i =0 \text{ for } i = 1, \dots ,p.
      \end{equation}
     Then $\gph S\vert_X$ is locally closed at $(\bar{x}, \bar{u})$ and one has
   \begin{equation}\label{SumRule-4-Incl1}  
    D^*_X S(\bar{x} \mid\bar{u})(y)  \subseteq \limsup_{\footnotesize \substack{ (x,u)\xrightarrow[]{\gph S\vert_X}(\bar{x}, \bar{u})\\  y' \xrightarrow[]{}y}}\   \bigcup_{\substack{ u'_i\in S_i (x) \\  \sum_{i=1}^{p} u'_i =u}}  {\rm proj}_{T_X(x)} \left(\sum_{i=1}^{p} D^*S_i \vert_X (x \mid u'_i)( y')  \right) .
  \end{equation}
  If $X$ is a smooth manifold around $\bar{x}$, 
    \begin{equation}\label{SumRule-4-smincl}  
    D^*_X S(\bar{x} \mid\bar{u})(y)  \subseteq   \bigcup_{\substack{ u_i\in S_i (\bar{x}) \\  \sum_{i=1}^{p} u_i=\bar{u}}}  {\rm proj}_{T_X(\bar{x})} \left(\sum_{i=1}^{p} D^*S_i \vert_X (\bar{x} \mid u_i)( y)  \right) .
  \end{equation}
  When $S_i\vert_X$ are graph-convex, the inclusions \eqref{SumRule-4-Incl1} and \eqref{SumRule-4-smincl} become equations respectively and the union is superfluous. 
\end{theorem}
\begin{proof}
  By \cite[Theorem 10.41]{VaAn}, for any $(x,u) \in \gph S\vert_X$ being close enough to $(\bar{x}, \bar{u})$:
  \begin{equation}\label{sumrule-2-pf-incl1}
    N_{\gph S\vert_X}(x, u) \subseteq  \bigcup_{\substack{ u'_i\in S_i (x) \\  \sum_{i=1}^{p} u'_i =u}} \left\{  \left(\sum_{i=1}^{p} v_i, -y' \right)  \Bigg\vert \  v_i \in D^*S_i \vert_X (x \mid u'_i)(y') \right\}
  \end{equation}
  and therefore
  \begin{equation}\label{sumrule-2-pf-incl2}
      \begin{aligned}
         & \limsup_{\footnotesize (x,u)\xrightarrow[]{\gph S\vert_X}(\bar{x}, \bar{u})}{\rm proj}_{T_X(x) \times \mathbb{R}^m} N_{\gph S\vert_X}(x, u) \\
         \subseteq  & \limsup_{\footnotesize (x,u)\xrightarrow[]{\gph S\vert_X}(\bar{x}, \bar{u})} \bigcup_{\substack{ u'_i\in S_i (x) \\  \sum_{i=1}^{p} u'_i =u}} \left\{    \left( {\rm proj}_{T_X(x)} \left( \sum_{i=1}^{p} v_i \right) , -y' \right) \Bigg\vert \ v_i \in D^*S_i \vert_X (x \mid u'_i)(y') \right\}. 
      \end{aligned}
  \end{equation}
  When $X$ is a smooth manifold around $\bar{x}$, by \Cref{coro:Osc-ProjCode} and \eqref{sumrule-2-pf-incl1}, 
    \begin{align*}
     D^*_X S(\bar{x} \mid\bar{u})(y)  =  & \   {\rm proj}_{T_X(\bar{x})} D^*S\vert_X(\bar{x}\mid\bar{u})(y)  \\
      \subseteq & \bigcup_{\substack{ u_i\in S_i(\bar{x}) \\  \sum_{i=1}^{p} u_i =\bar{u}}}   \left\{   {\rm proj}_{T_X(\bar{x})}\left( \sum_{i=1}^{p} v_i \right) \Bigg\vert \ v_i \in D^*S_i \vert_X(\bar{x} \mid u_i) (y)  \right\}.
  \end{align*}
  When $S_i\vert_X$ are convex, the inclusions \eqref{sumrule-2-pf-incl1} and \eqref{sumrule-2-pf-incl2} become equations and the union is superfluous.
\end{proof}
Next we illustrate why \eqref{CQforSumRule2} is stronger than \eqref{CQforSumRule4}. Without loss of generality we consider a summation of two mappings : $S = S_1 + S_2$. Then the constraint qualification \eqref{CQforSumRule2} writes
\begin{equation}\label{cq:stronger-twosum}
            \left. \begin{matrix}
         u_1 \in S_1 (\bar{x}), \  u_2 \in S_2 (\bar{x}), \  \sum_{i=1}^{2} u_i = \bar{u}\\
          v_1 \in D^*S_1 (\bar{x} \mid u_1) (0) \\
          v_2 \in D^*S_2(\bar{x} \mid u_2)(0) \\
          0 \in v_1 + v_2  + N_X(\bar{x})
    \end{matrix} \right\} \Longrightarrow v_1, v_2 =0. 
\end{equation}
Take $v_2 = 0$, then we have 
$$
     \left. \begin{matrix}
         u_1 \in S_1 (\bar{x})\\
          v_1 \in D^*S_1 (\bar{x} \mid u_1) (0) \\
          0 \in v_1   + N_X(\bar{x})
    \end{matrix} \right\} \Longrightarrow v_1 = 0.
$$
By \cite[Theorem 6.42]{VaAn}, we have $D^*S_1 \vert_X (\bar{x} \mid u_1) (0) \subseteq D^*S_1 (\bar{x} \mid u_1 )(0) + N_X(\bar{x}).$ Similarly we also have $D^*S_2 \vert_X (\bar{x} \mid u_2) (0) \subseteq D^*S_2 (\bar{x} \mid u_2 )(0) + N_X(\bar{x}).$ 
Likewise, \eqref{CQforSumRule2} enables $D^*S_i \vert_X (x\mid u_i) \subseteq D^*S_i (x\mid u_i)  + N_X(x)$ for each $S_i$ 
and thus \eqref{CQforSumRule2} indicates \eqref{CQforSumRule4}. Such a result is intuitive as \eqref{SumRule-2-Incl2} is neater while \eqref{SumRule-4-smincl} still carries $X$ into calculation with $D^*S_i \vert_X(\bar{x}\mid u_i)$. 

Next we provide an example to show that the constraint qualification \eqref{CQforSumRule2} is stronger than \eqref{CQforSumRule4}. In this example, \eqref{CQforSumRule2} is not satisfied and thus we can only turn to \Cref{SumRule-4} for calculation.

\begin{example}
Let $S_1, S_2 : \mathbb{R} \rightrightarrows \mathbb{R}$ be two set-valued mappings defined via $$\gph S_1 = \mathbb{R}^2_- , \  \gph S_2 =\mathbb{R}_+ \times \mathbb{R}_-$$ and $S = S_1 + S_2$. For $(\bar{x},\bar{u} ) = (0, 0) \in \gph S$, the only possible choice of $u_1 \in S_1(\bar{x})$, $u_2 \in S_2(\bar{x})$ with $ u_1 + u_2 =\bar{u}$ is $u_1 =0, u_2= 0$. 
The boundedness condition \eqref{SumRule-1-BddCond} is satisfied at $(\bar{x} , \bar{u} )$. By simple calculation we have 
\begin{equation*}
       N_{\gph S_1}(\bar{x}, u_1) = \mathbb{R}^2_+ , \ 
       N_{\gph S_2}(\bar{x}, u_2) = \mathbb{R}_- \times \mathbb{R}_+, 
\end{equation*}
and accordingly
\begin{equation*}
        D^*S_1 (\bar{x}\mid  u_1) (y)  = \begin{cases}
        \mathbb{R}_+ , & \text{ if } y \in \mathbb{R}_-\\
       \emptyset, & \text{ else} 
    \end{cases} , \ 
    D^*S_2 (\bar{x}\mid  u_2) (y)  = \begin{cases}
        \mathbb{R}_- , & \text{ if } y \in \mathbb{R}_-\\
       \emptyset, & \text{ else}
       \end{cases}.
\end{equation*}
Let $X = \dom S = \{0\}$. Immediately we have $N_X(\bar{x}) = \mathbb{R}$ and $T_X(\bar{x}) = \{0\}$. 
Then we can see that the constraint qualification \eqref{CQforSumRule2} is not satisfied as 
      \begin{equation*}
      \left. \begin{matrix}
        v_1 \in  D^*S_1 (\bar{x}\mid  u_1) (0)=  \mathbb{R}_+\\
          v_2 \in  D^*S_2 (\bar{x}\mid  u_2) (0)= \mathbb{R}_- \\ 
          0\in v_1 + v_2 + \mathbb{R}
              \end{matrix}  \ \right\} \nRightarrow v_1, v_2 = 0.
      \end{equation*}
By restricting $S_1$ and $S_2$ to $X$, we have $\gph S_1 \vert_X = \gph S_2 \vert_X  = \{0\} \times \mathbb{R}_- $ and  
\begin{equation*}
        D^*S_1 \vert_X (\bar{x}\mid  u_1) (y)  = D^*S_2 \vert_X (\bar{x}\mid  u_2) (y) = \mathbb{R}_+, \  \forall y \in \mathbb{R}. 
\end{equation*}
By this representation, the constraint qualification \eqref{CQforSumRule4} is satisfied.  Given that $X$ is also a smooth manifold at $\bar{x}$ and that $\gph S_1 \vert_X$, $\gph S_2 \vert_X$ are convex, both by \Cref{SumRule-4} and by definition we have 
\begin{equation*}
\begin{aligned}
    D^*_X  S (\bar{x}\mid \bar{u}) (y) = D^*_{\dom S} S (\bar{x}\mid \bar{u}) (y) & = {\rm proj}_{T_X(\bar{x})} \left(D^*S_1 \vert_X (\bar{x}\mid  u_1) (y)  + D^*S_2 \vert_X (\bar{x}\mid  u_1) (y) \right) \\
    & ={\rm proj}_{\{0\} } ( \mathbb{R}_+) = \{0\}, \  \forall y \in \mathbb{R}.  
\end{aligned}
\end{equation*}
\end{example}

\section{Conclusions}\label{sect:conclu}
In this article, we focused on introducing more properties of a newly introduced tool, projectional coderivatives, and deriving the corresponding calculus rules. By exploiting the structure of smooth manifolds, we simplified the expression of projectional coderivatives of any set-valued mapping relative to a smooth manifold to a fixed-point one and subsequently extended the generalized Mordukhovich criterion to such a setting. For a closed set in general, we introduced the chain rule of this tool for composition of two set-valued mappings with outer semicontinuity. Along with different constraint qualifications that feature different levels of the set restriction $X$, we introduced two sum rules for users to compute according to their problem structures. An example was provided to illustrate the difference.








\bibliography{References.bib}
\end{document}